% This is a sample LaTeX file for a JOTA paper. A sample figure file (Fig_1.pdf) is required to typeset this file.
%A standard way of writing LaTeX files is to give everything a label: sections, formulas, figures, references, etc. Labeling makes it easy to modify a LaTeX file, but it is often difficult to create and remember the labels. Labeling is not used in this example.

\documentclass[smallextended,referee,envcountsect,]{svjour3}
% The option smallextended is the standard JOTA format.
% The option referee  makes the paper double-spaced.
% The option envcountsect numbers theorems, etc, by section.
% svjour3 is the document class for Springer journals.
\smartqed
%This command right justifies \qed throughout the paper.
\usepackage{graphicx} %This package is used to insert figures.
%%%%%% added-------------------------------------------------
\linespread{1} % single spaced
\usepackage[top=1.2in, bottom=1.2in, left=1.25 in, right=1.25 in]{geometry}
\usepackage{amsmath}
\usepackage{amssymb}
\usepackage{booktabs} 
\usepackage[figuresright]{rotating}
\usepackage{algorithm}
\usepackage{algorithmic}
%--------------------------------------------------------------
\journalname{JOTA}

\begin{document}

\title{A Vectorized Positive Semidefinite Penalty Method for Unconstrained Binary Quadratic Programming}

\subtitle{}

\author{Xinyue Huo and Ran Gu}

\institute{Xinyue Huo \at
             Nankai University \\
              School of Statistics and Data Science, Nankai University, Tianjin 300071, China\\
              xy\underline{~}h@mail.nankai.edu.cn
           \and
              Ran Gu \at
              Nankai University \\
              School of Statistics and Data Science, Nankai University, Tianjin 300071, China\\
              rgu@nankai.edu.cn
}

\date{Received: date / Accepted: date}
%The correct dates will be entered by the editor.

\maketitle

\begin{abstract}
The unconstrained binary quadratic programming (UBQP) problem is a class of problems of significant importance in many practical applications, such as in combinatorial optimization, circuit design, and other fields. The positive semidefinite penalty (PSDP) method originated from research on semidefinite relaxation, where the introduction of an exact penalty function improves the efficiency and accuracy of problem solving. In this paper, we propose a vectorized PSDP method for solving the UBQP problem, which optimizes computational efficiency by vectorizing matrix variables within a PSDP framework. Algorithmic enhancements in penalty updating and initialization are implemented, along with the introduction of two algorithms that integrate the proximal point algorithm and the projection alternating BB method for subproblem resolution. Properties of the penalty function and algorithm convergence are analyzed. Numerical experiments show the superior performance of the method in providing high-quality solutions and satisfactory solution times compared to the semidefinite relaxation method and other established methods.
\end{abstract}
\keywords{Binary quadratic programming \and Positive semidefinite penalty function \and Proximal point algorithm \and Projection alternating BB stepsize}
\subclass{65K05\and  
90C09 \and 90C20}

%All acknowledgements should be placed in the back of the paper after Conclusions..

\section{Introduction}

This study focuses on unconstrained binary quadratic programming (UBQP), a specialized form of integer programming. In this type of programming, the objective function takes a quadratic form and the variables are constrained to binary values of 0 and 1. This class of problems is considered vital and is often formulated as follows:
\begin{equation}
\min_{x \in \{0,1\}^n} x^T Q x + b^T x,
\end{equation}
where $x = [x_1, x_2, \ldots, x_n]^T \in  R^n$, $Q\in R^{n \times n}$, and $c\in R^n$. This problem can also be viewed as a special type of quadratically constrained quadratic programming (QCQP) problem, where the 0-1 constraint is written as $x_i^2 - x_i = 0$.

This programming has a wide range of research significance in applications, such as combinatorial optimization~\cite{Boros1991}, financial investments~\cite{Cesarone_2014}, signal processing~\cite{1363998}, production scheduling~\cite{TEIXEIRA2010425} and other fields. In general, this problem is NP-hard~\cite{PARDALOS1992119} due to the binary constraint. 

\subsection{Related works}
Due to the potential of the unconstrained quadratic programming problem, many authors have studied the optimality conditions and sought high-quality solution methods for the UBQP problem in recent years. Beck and Teboulle~\cite{doi:10.1137/S1052623498336930} derived the sufficient and necessary global optimality conditions. Based on the work of Beck and Teboulle, Xia~\cite{Xia2009NewOC} obtained tighter sufficient optimality conditions and explored the relationship between the optimal solution of the problem and that of its continuous relaxation. These studies have provided us with a deeper understanding of UBQP problems, leading to the development of efficient solution strategies.

One of the main categories of methods for solving UBQP is exact solvers. Typically, this is achieved by a tree-search approach that incorporates a generalized branch-and-bound technique. In the 1980s, both Gulati et al.~\cite{GULATI1984121} and Barahona et al.~\cite{1989Experiments} described a branch-and-bound algorithm for UBQP. More recently, Krislock et al.~\cite{Krislock2017BiqCrunchAS} present BiqBin, an exact solver for binary quadratic problems. The approach is based on an exact penalty method that efficiently transforms the original problem into an instance of Max-Cut, and then solves the Max-Cut problem using a branch-and-bound algorithm. The numerical results show that BiqBin is a highly competitive solver.

Heuristic algorithms are also often used to solve UBQP problems exactly. The most common are tabu search~\cite{inproceedings}, quantum annealing~\cite{PMID:35140264}. For example, Wang et al.~\cite{10.1007/978-3-642-29828-8_26} introduce a backbone-guided tabu search algorithm that involves a fundamental tabu search process alternating with a variable fixing/freeing phase, which is determined by the identification of strongly determined variables. Due to the NP-hardness, the optimal solution usually cannot be found in polynomial time. Therefore, it is necessary to find a fast method that is not strictly exact. Based on this, many scholars have proposed a number of inexact methods.

Among the inexact methods, there are mainly methods based on relaxation and penalty function methods. There are some penalty function methods based on vector variables. Liu et al.~\cite{Liu2018ACA} and Nayak et al.~\cite{RePEc:spr:jcomop:v:39:y:2020:i:3:d:10.1007_s10878-019-00517-8} have adopted a continuous approach, converting binary constraints into continuous ones by using Fischer-Burmeister nonlinear complementarity problem functions, then it can be further carried on the smoothing processing by aggregate function. However, this method is relatively sensitive to the initial values, and the parameter updates are complex and may not serve as exact penalty. Furthermore, the exact penalty method is widely used in many methods, transforming the binary constraints into a set of inequality constraints. Yuan et al.~\cite{yuan2017exact} introduced an alternative approach by replacing the binary constraints with a Mathematical Programming with Equilibrium Constraints (MPEC). This transformation converts the UBQP problem into a biconvex problem. While this algorithm demonstrates high computational efficiency, the solution accuracy still needs to be improved for certain data sets. Luo et al.~\cite{luo2019enhancing} introduced a special penalty method called conditionally
quasi-convex relaxation based on
semidefinite relaxation and they showed that the constructed
penalty function is exact. However, the subproblem of this algorithm still needs to solve an SDP subproblem, which is time-consuming.

Matrix-based relaxation methods are relatively common approaches, with typical examples including the reconstruction-linearization technique~\cite{Sherali2007RLTAU} and positive semidefinite relaxation (SDR)~\cite{fujie1997semidefinite}. SDR is a classical method for solving QCQP, which transforms quadratic terms of vector variables into matrix variables, drops the rank-one constraint, relaxes the problem to a semidefinite programming (SDP) problem, and utilizes SDP solvers for the solution. In many cases, SDR can obtain the exact solution to QCQP or prove to achieve a certain approximation ratio~\cite{2023Exact,5447068}. It also has wide applications in practice. 

When the matrix solution of SDR is not rank one, the traditional strategies are to use projection or random sampling~\cite{5447068}. Recently, other methods have also been studied to further obtain high-quality rank one solutions from SDR solutions~\cite{10.1093/imanum/draa031,article}. Gu et al. provided a broader perspective to SDR research by introducing a positive semidefinite-based penalty (PSDP) formulation~\cite{10.1093/imanum/draa031}. When the penalty factor is zero, it corresponds exactly to the semidefinite relaxation of QCQP problems. Moreover, if the penalty factor is sufficiently large, the penalty function problem is equivalent to QCQP. Therefore, by gradually increasing the penalty factor, the matrix solution of SDR can reach rank one. In addition, the authors used a proximal point algorithm (PPA) to solve the penalty function subproblems, which demonstrated remarkable performance when applied to UBQP, with a significant proportion yielding exact solutions for UBQP.

While the solution quality provided by PSDP is high, its significant computational cost as an SDR-based approach cannot be overlooked compared to vector methods. Therefore, a crucial research question is how to maintain the solution quality of PSDP while significantly reducing its computational cost, ideally to a form comparable to the computational efficiency of vector methods.

\subsection{Our contribution}
In the PSDP framework, we select a specified initial value for the penalty factor and find that the penalty function problem can depend only on the diagonal elements of the matrix. Following the penalty factor update rule of PSDP, the generated sequence of PSDP penalty problems still ensures that it is only related to the matrix diagonal elements, thus causing the matrix problem in PSDP to degrade to a vector problem, achieving the vectorization of PSDP. Furthermore, in the vectorized PSDP algorithm we propose, the algorithm for solving subproblem and penalty update rule maintain the same form as PSDP, while incorporating some improvements in the details. Therefore, the vectorized PSDP not only maintains the quality of solutions in PSDP, but also keeps the computational cost at the level of vector methods.

To further accelerate the convergence and improve the solution accuracy, we apply an algorithm accelerated by the projection alternating Barzilai-Borwein (BB) method~\cite{2005Projected}. This method uses the projection step to effectively handle the 0-1 constraints, while the alternating BB method provides a robust and efficient framework for updating the search direction and step size. Additionally, we introduce a criterion to update the penalty parameter during the iterations. This ensures that the active set is appropriately updated throughout the iteration process, thereby ensuring the convergence of the algorithm. We tested our algorithm on one randomly generated dataset and five benchmark datasets, comparing its performance with four existing methods. Experimental results show that our algorithm performs well in terms of both time efficiency and accuracy. 

\subsection{Organization}
In section~\ref{sec2}, we will present our vectorized exact penalty function and its transformation process. In section~\ref{sec3}, we will present our subproblem, two methods for solving the subproblem, and an approach for updating the penalty factor. Section~\ref{sec4} will provide the theoretical foundation of our algorithm, including the exact penalty function theory and the convergence of the subproblem algorithm. Finally, in section~\ref{sec5}, we will demonstrate the comparison of our algorithm with several other methods across different datasets through numerical experiments.
\section{Vectorization of PSDP}\label{sec2}
In this paper, we focus on the following UBQP problem:
\begin{equation}\label{bqp}
\begin{array}{cl}
\min & x^{T}Qx+b^Tx \\
\text { s.t. } &x \in \{0,1\}^n ,
\end{array}
\end{equation}
where $Q\in \mathbb{S}^{n \times n} $ is a real symmetric matrix, $b\in \mathbb{R}^{n}$  is a vector. We can reformulate \eqref{bqp} as a problem with alternative equivalent equation constraints:
\begin{equation}\label{o1}
\begin{array}{cl}
\min &   x^{T}Qx+b^Tx \\
\text { s.t. } &  x_{i}^{2}- x_i=0, \quad i=1, \ldots, n .
\end{array}
\end{equation}
Therefore, UBQP can be viewed as a QCQP problem subject to specific constraints.

\subsection{PSDP}
For a general QCQP as in \eqref{qcqp}, PSDP~\cite{10.1093/imanum/draa031} adds a matrix variable $Z\succeq 0$ and a penalty term $P\cdot Z$ with $P\succeq 0$ to the objective, proposing the following penalty problem \eqref{P1}.
\begin{equation}\label{qcqp}
\begin{aligned}
\min _{x \in \mathbb{R}^{n}} & \left(\begin{array}{cc}
1 & x^{\mathrm{T}} \\
x & x x^{\mathrm{T}}
\end{array}\right) \cdot\left(\begin{array}{cc}
0 & g_{0}^{\mathrm{T}} \\
g_{0} & Q_{0}
\end{array}\right) \\
\text { s.t. } & \left(\begin{array}{cc}
1 & x^{\mathrm{T}} \\
x & x x^{\mathrm{T}}
\end{array}\right) \cdot\left(\begin{array}{cc}
c_{i} & g_{i}^{\mathrm{T}} \\
g_{i} & Q_{i}
\end{array}\right)=0, \quad i=1, \ldots, m_{e}, \\
& \left(\begin{array}{cc}
1 & x^{\mathrm{T}} \\
x & x x^{\mathrm{T}}
\end{array}\right) \cdot\left(\begin{array}{cc}
c_{i} & g_{i}^{\mathrm{T}} \\
g_{i} & Q_{i}
\end{array}\right) \geq 0, \quad i=m_{e}+1, \ldots, m .
\end{aligned} 
\end{equation}

\begin{equation}\label{P1}
\begin{aligned}
\min _{x \in \mathbb{R}^{n}, Z \in \mathbb{S}^{n}} & \left(\begin{array}{lc}
1 & x^{\mathrm{T}} \\
x & x x^{\mathrm{T}}+Z
\end{array}\right) \cdot\left(\begin{array}{cc}
0 & g_{0}^{\mathrm{T}} \\
g_{0} & Q_{0}
\end{array}\right)+P \cdot Z \\
\text { s.t. } & \left(\begin{array}{cc}
1 & x^{\mathrm{T}} \\
x & x x^{\mathrm{T}}+Z
\end{array}\right) \cdot\left(\begin{array}{ll}
c_{i} & g_{i}^{\mathrm{T}} \\
g_{i} & Q_{i}
\end{array}\right)=0, \quad i=1, \ldots, m_{e}, \\
& \left(\begin{array}{cc}
1 & x^{\mathrm{T}} \\
x & x x^{\mathrm{T}}+Z
\end{array}\right) \cdot\left(\begin{array}{ll}
c_{i} & g_{i}^{\mathrm{T}} \\
g_{i} & Q_{i}
\end{array}\right) \geq 0, \quad i=m_{e}+1, \ldots, m, \\
& Z \succeq 0.
\end{aligned}
\end{equation}
When $P$ is 0, \eqref{P1} is equivalent to a standard SDR. 

In the UBQP problem, with $Q_0=Q$, $Q_i=e_i e_i^T$ ($e_i$ is a vector with 1 in the $i $-th position and 0 elsewhere), $g_0=0.5b_i$, $g_i=-0.5e_i$ and $c_i=0$ as in \eqref{qcqp}, the penalty problem \eqref{P1} corresponds to the following problem.
\begin{equation}
\begin{aligned}\label{1}
\min _{x \in \mathbb{R}^{n}, Z \in \mathbb{S}^{n}} & x^TQx+b^Tx+(Q+P) \cdot Z \\
\text { s.t. } & x_i^2-x_i+z_i=0,i=1,2,\dots,n,\\
& Z \succeq 0,
\end{aligned}
\end{equation}
where $z_i$ is the $i$-th diagonal element of $Z$.

PSDP employed a PPA method to solve problem \eqref{1}, with each PPA iteration involving the solution of an SDP problem, necessitating the repeated solving of SDPs. PSDP begins with $P=0$, which is equivalent to SDR, and then updates the penalty factor $P$ by $P = P+\alpha Z$.

\subsection{Vectorized PSDP}
Solving SDP is the major computational obstacle in PSDP, so we consider what kind of problem structure can make problem \eqref{1} easy to solve. 

We note that when $Q+P$ is a diagonal matrix, the solution of $Z$ depends only on its diagonal elements. This is because the first constraint involves only the diagonal elements of $Z$, and when $Q+P$ is a diagonal matrix, the objective function also involves only the diagonal elements of $Z$. For the remaining positive semidefinite constraints on $Z$, as long as its diagonal elements are non-negative and the non-diagonal elements are set to zero, it naturally satisfies the constraints. Therefore, we only need to make $Q+P$ a diagonal matrix so that there exists a diagonal matrix solution for $Z$. In addition, since $Z$ is a diagonal matrix, $Q+P$ actually only updates the diagonal elements when updating $P=P+\alpha Z$. Therefore, the new penalty problem still has a diagonal matrix solution for $Z$.

The construction of $P$ is straightforward that we simply need to set the off-diagonal elements of $P$ to be the negation of the off-diagonal elements of $Q$. Supposing that we choose an appropriate $P$ such that $Q + P$ also becomes a diagonal matrix $P^{\star}$,  problem \eqref{1} is thus transformed into the following form. 
\begin{equation}\label{p1}
\begin{array}{cc}
\min _{x \in \mathbb{R}^{n}} & x^T Q x+b^T x +\sum_{i=1}^{n}p_i(x_i-x_i^2)\\
\text { s.t. } & 0\leq x_i\leq 1, \quad i=1, \ldots, n, \\
\end{array} 
\end{equation}
where $p_i$ is the $i$-th diagonal element of $P^{\star}$, the inequality $x_i^2-x_i\leq 0$ has been replaced with $0\leq x_i\leq1$, and $Z$ has been eliminated. 

Billionnet and Elloumi~\cite{Billionnet2007UsingAM} have discussed the form of problem \eqref{p1}, exploring how to construct an appropriate $p$ to preserve the convexity of problem \eqref{p1} while also preserving equivalence with problem \eqref{bqp} as much as possible. In contrast, we continue to follow the PSDP approach of updating $p$, allowing the quadratic problem \eqref{p1} to be non-convex, and solving it using iterative algorithms.

\section{Algorithms}\label{sec3}
 Since problem \eqref{p1} can lead to a non-convex quadratic problem, finding the global optimal solution is not easy. We present two algorithms. First, we propose a PPA based on an improvement of the PPA in PSDP. Then, we introduce an accelerated method, namely the projection alternating BB method. Other algorithmic details, including the subproblem termination criterion, penalty factor updates, and initialization, are also included in this section.
 
\subsection{PPA}
In PSDP, when applying the PPA to solve \eqref{1}, the $k$-th iteration is carried out in the following form.
\begin{equation}\label{2}
\begin{array}{rl}
\left(d^{k}, Z^{k+1}\right)=\underset{d \in \mathbb{R}^{n}, Z \in \mathbb{S}^{n}}{\arg \min } & H\left(x^{k}+d, Z, P\right)+d^{\mathrm{T}} P d \\
\text { s.t. } & \left(x^{k}+d, Z\right) \in G_1,
\end{array}
\end{equation}
where $G_1=\{
(x,Z)|x_i^2-x_i+z_i=0,i=1,2,\dots,n, Z \succeq 0\}$ and $H \left(x^{k}+d, Z, P\right)=x^TQx+b^Tx+(Q+P) \cdot Z $. Its proximal term is $d^TPd$. For the vectorized problem \eqref{p1}, we observe the implementation of this PPA as follows:
\begin{equation}\label{dp1}
\begin{array}{rl}
d^{k}=\underset{d \in \mathbb{R}^{n}}{\arg \min  } & h\left(x^{k}+d,p\right)+d^T Pd \\
\text { s.t. }& x^{k}+d\in F_1 ,\\
\end{array}
\end{equation}
where $F_1=\{x|0\leq x_i\leq 1\}$, $h\left(x,p\right)=x^{T} (Q-P^{\star}) x+(b+p)^{T}x$, and $p$ is the vector composed of diagonal elements of $P^{\star}$.

By utilizing the nature of $Q+P=P^\star$, it can be observed that the objective function of \eqref{dp1} is a linear function of $d$. It is known that minimizing a linear function over box constraints leads to all components of the optimal solution being taken on the boundaries. This causes $x$ to reach the critical point of \eqref{p1} in one iteration. However, it is typically a low-quality local solution, since problem \eqref{p1} is non-convex. Therefore, we have to modify the proximal term to better solve \eqref{p1} using PPA.

Building on \eqref{dp1}, we introduce an additional quadratic term $d^T H_p d$ to the existing proximal term, where $H_p \succ 0$ to ensure strict convexity of the new problem and avoid linearity. Furthermore, we need to ensure that the new proximal term $(H_p + P)$ is positive semidefinite since we do not strictly require $P \succeq 0$ here. Lastly, it is necessary for $H_p$ to be a diagonal matrix to facilitate the computation of the subproblems. Therefore, we define $H_p$ in the following form.
$$
H_p=\begin{bmatrix}
  \max(\sum|Q_{1i}|-p_1,\epsilon)&0& \cdots & 0\\
  0& \max(\sum|Q_{2i}|-p_2,\epsilon) & \cdots & 0\\
  \vdots & \vdots & \ddots &\vdots \\
 0 & 0& \cdots & \max(\sum|Q_{ni}|-p_n,\epsilon)
\end{bmatrix},
$$
where $\epsilon >0 $ is an arbitrarily small number. Thus, $H_p\succ 0$ and
$$
H_p+P = H_p-Q+P^{\star}
\succeq \begin{bmatrix}
  \sum_{i\neq 1}|Q_{1i}|& -Q_{12}& \cdots & -Q_{1n}\\
  -Q_{21}& \sum_{i\neq2}|Q_{2i}|& \cdots & -Q_{2n}\\
  \vdots & \vdots & \ddots &\vdots \\
 -Q_{n1} &-Q_{n2} & \dots &\sum_{i\neq n}|Q_{ni}|
\end{bmatrix}
\succeq 0,
$$
where the final step utilizes the fact that a diagonally dominant matrix is positive semidefinite.

Setting the proximal term to be $(H_p+P=H_p-Q+P^\star)$, we obtain the $k$-th iteration as follows:
\begin{equation}\label{dp}
\begin{array}{rl}
d^{k}=\underset{d \in \mathbb{R}^{n}}{\arg \min } & h\left(x^{k}+d,p\right)+d^T (H_p-Q+P^{\star})d \\
\text { s.t. }& x^{k}+d\in F_1 \\
\end{array}
\end{equation}
In fact, 
$$h\left(x^{k}+d,p\right)+d^T (H_P-Q+P^\star)d=h\left(x^{k},p\right)+\nabla h\left(x^{k},p\right)^Td+d^TH_pd.$$ 
Thus, \eqref{dp} is separable in variables, equivalent to solving n one-dimensional problems. Without considering box constraints, the optimal solution is as follows:
\begin{equation}
    \tilde d^k_j=\frac{-\nabla h\left(x^{k},p\right)^j}{ 2h_p^j},
\end{equation} 
where $\tilde d^k_j$ is $j$-th element of $\tilde d^k$, $\nabla h\left(x^{k},p\right)^j$ is the $j$-th element of $\nabla h\left(x^{k},p\right)$ and $h_p^j$ is the $j$-th diagonal element of $H_p$. Then $d^k$ can be obtain by projecting $\tilde d^k$ onto box set $(F_1-x^k)$, that is,
\begin{equation}\label{d2}
    d^k = \mathcal P_{(F_1-x^k)}(\tilde d^k).
\end{equation}
Therefore, the next iterative point $x^{k+1}=x^{k}+d^{k}$ is given by 
\begin{equation}\label{x1}
    x^{k+1}=\mathcal P(x^{k}+\tilde d^{k}),
\end{equation}
where $\mathcal P$ is a projection operator defined by 
$$
\mathcal P(x)=\left \{   \begin{array}{l}
0,x<0,\\
x,0\leq x \leq 1,\\
1,x>1.
\end{array}
\right.
$$

We present the algorithmic framework of this PPA in Algorithm~\ref{algo1}, which is used to solve the penalty problem \eqref{p1} with a given penalty factor $p$.

\begin{algorithm}
\caption{PPA for subproblem \eqref{p1} with fixed $p$}\label{algo1}
\begin{algorithmic}[1]
\STATE {Require:$0\leq x_0\leq1$}
\WHILE{termination criterion $>1e^{-5}$}
      \STATE{ Update $d^k$  by \eqref{d2}}
      \STATE {Update $x^{k+1}$ by \eqref{x1}}
      \STATE{$k:=k+1$}
\ENDWHILE
\RETURN The optimal solution $\boldsymbol{x^{\star}}$.
\end{algorithmic}
\end{algorithm}
\subsection{An acceleration algorithm based on BB method}
While the previously proposed algorithm is straightforward and computationally efficient, it exhibits relatively long computation time for local convergence. Consequently, we opt to use the projection alternating BB method introduced by Dai and Fletcher~\cite{2005Projected} to speed up the computational process.

The projection alternating BB method strategically alternates between utilizing long and short BB step sizes, as outlined below:
\begin{equation}\label{bb1}
\alpha_{k}^{A B B}=\left\{\begin{array}{ll}
\alpha_{k}^{B B 1}, & \text { for odd } k, \\
\alpha_{k}^{B B 2}, & \text { for even } k.
\end{array}\right.
\end{equation}
Here, 
$$
\alpha_{k}^{B B 1}=\frac{s_{k-1}^{T} s_{k-1}}{s_{k-1}^{T} y_{k-1}}
\text { and }
\alpha_{k}^{B B 2}=\frac{s_{k-1}^{T} y_{k-1}}{y_{k-1}^{T} y_{k-1}},
$$
where $s_{k-1}=x_{k}-x_{k-1} $ and  $y_{k-1}=g^{k}-g^{k-1}$. In the context of problem \eqref{p1}, we have $g^k:=\nabla h \left(x^{k},p\right)$. Then the update of $x^{k+1}$ is expressed as 
 \begin{equation}\label{x2}
    x^{k+1}=\mathcal P(x^{k}-\alpha_{k}^{A B B}g^k).
 \end{equation}

We present the framework of this projection BB method in Algorithm~\ref{algo2}.

\begin{algorithm}
\caption{PABB for subproblem \eqref{p1} with fixed $p$}\label{algo2}
\begin{algorithmic}[1]
\STATE{Require: $0\leq x_0\leq1$}
\WHILE{termination criterion $>1e^{-5}$}
      \STATE Update $\alpha_{k}^{A B B}$ by \eqref{bb1}
      \STATE Update $x^{k+1}$ by \eqref{x2}
      \STATE $k:=k+1$
\ENDWHILE 
\RETURN The optimal solution $\boldsymbol{x^{\star}}$.
\end{algorithmic}
\end{algorithm}

\subsection{Termination criterion}\label{kkt}
For the optimization problem defined by \eqref{p1}, the termination criterion is established based on the Karush-Kuhn-Tucker (KKT) conditions.

The Lagrangian function is given by $$L(x,u,v)=x^{T} (Q-P^{\star}) x+(b+p)^T x-u^T x+v^T(x-e^T),$$
where $u$ and $v$ correspond to the dual variables. Then the first-order optimality conditions can be expressed as follows:
\begin{equation}\label{kkt1}
 \begin{aligned}
 \nabla_x L=2  (Q-P^{\star}) x+&(b+p)^T-u^T+v^T=0, \\
0\leq x& \leq 1,\\
u^Tx&=0, \\
v^T(e-x)&=0, \\
u\geq 0&,v\geq 0.\\
\end{aligned}
\end{equation}
By introducing the index sets $I=\{i|0<x_i<1\}$, $K=\{i|x_i=1\}$, $J=\{i|x_i=0\}$, and defining $\bar{Q}=Q-P^{\star}$ and $\bar{b}=b+p$, the conditions can be rewritten as:
$$
\begin{array}{l}
cret_1=2\bar{Q}_{II} x_{I}+\bar{Q}_{Ik}e_K+\bar{b}_{I}=0, \\
cret_2=2\bar{Q}_{JI}x_{I}+\bar{Q}_{JK}e_K+\bar{b}_{J}=u^T_{J}\geq 0, \\
cret_3=2\bar{Q}_{KI} x_{I}+\bar{Q}_{KK}e_K+\bar{b}_{K}=-v^T_{K}\leq0.
\end{array}
$$

Consequently, we present the termination criterion as 
\begin{equation}\label{cret}
    cret/\rho<\delta,
\end{equation}
where
$$cret=max\{max(|cret_1|),max\{|min(cret_2,0)|\},max\{|max(cret_3,0)|\}\}$$
$$\text{and }\rho=max\{1,|2(Q-P^{\star})|,|b+p|\}.$$

\subsection{Penalty parameter update}
Due to the adoption of the penalty factor update from PSDP, specifically $P^\star = P^\star + \alpha Z$, where both $P^\star$ and $Z$ are diagonal matrices in the vectorized PSDP algorithm, only the diagonal elements of $P^\star$ need to be updated, that is,
\begin{equation}\label{pp1}
    p^{s}=p^{s-1}+\alpha^s z^s.
\end{equation}
 Here, $z^s=(x_1^s-x_1^{s^2},x_2^s-x_2^{s^2},\dots, x_n-x_n^{s^2})^T$, $x^s$ denotes the best value of $x$ after the $s-1$ th update of $p$, and $p^s$ denotes the penalty parameter after s updates.

The choice of the parameter $\alpha$ significantly impacts the overall efficiency of the algorithm. When $\alpha$ is too small, it may lead to smaller iteration steps and an increase in the number of iterations, thus increasing the runtime and slowing down convergence. Conversely, if $\alpha$ is too large, it can easily cause $x$ to rapidly approach the boundaries during the update process, potentially halting at a low-quality local solution. Therefore, selecting an appropriate value for $\alpha$ is crucial.

We opt for an appropriate $\alpha$ from the perspective of the active set. Ideally, after updating $p$, we desire the new optimal solution $x$ of \eqref{p1} to have a different active set compared to before the update, while ensuring that the change is not too drastic. In the following lemma, we provide an $\alpha$ that guarantees a change in the active set. 
\begin{lemma}
      In the iterations, we choose $\alpha^s$ as (\ref{eq:alpha}), then the active set can be updated. 
      \begin{equation}\label{eq:alpha}
    \alpha^s=\lambda_{min}(z^{s^{-\frac{1}{2}}}\bar{Q}_{II}^sz^{s^{-\frac{1}{2}}}).
\end{equation}
\end{lemma}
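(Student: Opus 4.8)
The plan is to show that the prescribed $\alpha^s$ is exactly the threshold at which the reduced Hessian of \eqref{p1} on the interior block loses positive definiteness, so that a zero-curvature direction appears along which the objective is flat and can be followed to the boundary, thereby enlarging the active set. First I would record the structure of the update. Writing $z_i^s=x_i^s(1-x_i^s)$, the components indexed by $J$ and $K$ (where $x_i^s\in\{0,1\}$) vanish, while those indexed by $I$ are strictly positive; hence $D:=\mathrm{diag}(z_I^s)\succ 0$ and the update $p^s=p^{s-1}+\alpha^s z^s$ perturbs only the diagonal entries of $\bar{Q}$ lying in the block $I$. Since $\bar{Q}=Q-P^{\star}$ with $P^{\star}$ diagonal, the new interior block is $\bar{Q}_{II}-\alpha^s D$ while $\bar{Q}_{IK}$ is unchanged. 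Because $x^s$ is a local minimizer of \eqref{p1} for $p^{s-1}$ and every coordinate in $I$ is strictly interior, the second-order necessary condition gives the reduced Hessian $\bar{Q}_{II}\succeq 0$; here $\bar{Q}_{II}$ is understood with the current penalty $p^{s-1}$ (the formula \eqref{eq:alpha} must use this matrix, since $\alpha^s$ is needed to form $p^s$).

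Next I would carry out the eigenvalue computation. By the congruence $D^{-1/2}\bigl(\bar{Q}_{II}-\alpha D\bigr)D^{-1/2}=D^{-1/2}\bar{Q}_{II}D^{-1/2}-\alpha I$, choosing $\alpha=\alpha^s=\lambda_{\min}\bigl(D^{-1/2}\bar{Q}_{II}D^{-1/2}\bigr)$ makes the right-hand side positive semidefinite with a nontrivial kernel. Sylvester's law of inertia then transfers this to $\bar{Q}_{II}-\alpha^s D$, which is therefore positive semidefinite but singular; let $w\neq 0$ satisfy $(\bar{Q}_{II}-\alpha^s D)w=0$. Note that $\alpha^s\geq 0$ by $\bar{Q}_{II}\succeq 0$, and $\alpha^s>0$ precisely when the old interior block is nonsingular, which is exactly the nondegenerate case in which a genuine update occurs.

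The decisive step, and the one I expect to be the main obstacle, is to convert this singularity into an actual change of the active set. I would consider the updated objective restricted to the face $x_J=0$, $x_K=e_K$ with $x_I$ free; it is the convex quadratic $h(\,\cdot\,,p^s)$ with Hessian $2(\bar{Q}_{II}-\alpha^s D)\succeq 0$. At any interior stationary point $\tilde{x}_I$ (where $cret_1=0$ for the updated data), the expansion $h(\tilde{x}_I+tw,p^s)=h(\tilde{x}_I,p^s)+t\,\nabla_{x_I}h(\tilde{x}_I,p^s)^T w+t^2 w^T(\bar{Q}_{II}-\alpha^s D)w$ collapses to $h(\tilde{x}_I,p^s)$, because the gradient term vanishes by stationarity and the quadratic term vanishes since $w$ is a null vector. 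Hence the objective is exactly constant along $\pm w$; as the box is bounded, following this direction drives a first coordinate to $0$ or $1$ at no change in objective value, yielding an optimal face point whose active set strictly contains the previous $J\cup K$.

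I would still need to dispose of the degenerate alternatives carefully, which is the delicate part of the write-up. If no interior stationary point exists on the face, then its minimizer already lies on the boundary, so the active set is again enlarged. One should also confirm that $x^s$ is not forced to stay optimal: substituting $x_I^s$ into the updated $cret_1$ leaves the residual $\alpha^s z_i^s(1-2x_i^s)$, which is generically nonzero, so the interior solution genuinely moves rather than remaining fixed. Assembling these cases produces an optimal solution of the updated subproblem whose active set strictly enlarges the previous one, establishing that the active set is updated.
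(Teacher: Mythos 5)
Your proposal is correct in substance and lands on the same critical value --- the generalized eigenvalue at which $\bar{Q}_{II}^s-\alpha Z_{II}^s$ becomes singular --- but it gets there by a genuinely different mechanism than the paper. The paper works entirely at first order: it substitutes $y_I=2x_I-e_I$ into the stationarity equation $2\bar{Q}_{II}x_I+\bar{Q}_{IK}e_K+\bar{b}_I=0$, observes that $\bar{Q}_{II}e_I+\bar{b}_I$ is invariant under the update (the $-\alpha^s Z_{II}^s$ change in $\bar{Q}_{II}$ cancels the $+\alpha^s z_I^s$ change in $\bar b_I$), and concludes that the solution of $(\bar{Q}_{II}^s-\alpha^s Z_{II}^s)\,y_I^{s+1}=\bar{Q}_{II}^s y_I^s$ blows up as the matrix becomes singular, so some $|y_i^{s+1}|\ge 1$ and the corresponding coordinate leaves the interior. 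You instead argue at second order: the congruence $D^{-1/2}(\bar{Q}_{II}-\alpha D)D^{-1/2}=D^{-1/2}\bar{Q}_{II}D^{-1/2}-\alpha I$ shows the reduced Hessian acquires a kernel exactly at $\alpha^s$, and you follow the resulting zero-curvature direction to the boundary of the box at no cost in objective. Each route buys something: the paper's tracks where the interior stationary point actually goes (it escapes the box, or ceases to exist when the singular system is inconsistent --- a case your write-up also handles), which is closer to what the iteration computes; yours explains more cleanly \emph{why} the threshold is an eigenvalue, and it surfaces the hidden hypothesis $\bar{Q}_{II}^s\succeq 0$ (needed for $\alpha^s\ge 0$, and justified only if the subproblem solver returns a point satisfying second-order necessary conditions, whereas Theorem~\ref{thm2} guarantees only first-order stationarity) --- an assumption the paper's proof never states. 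Your closing computation of the residual $\alpha^s z_i^s(1-2x_i^s)$ after the update is a correct and worthwhile addition that the paper omits. Both arguments remain informal at the same spot: neither rigorously rules out the degenerate case in which the constant right-hand side is orthogonal to the new null space and an interior stationary point survives, which is presumably why the paper immediately rescales $\alpha^s$ by $\eta\in(0,1)$ in \eqref{eq:alpha1}.
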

\begin{proof}

    From the KKT condition, we have 
\begin{equation*}
    2\bar{Q}_{II} x_{I}+\bar{Q}_{IK}e_K+\bar{b}_{I}=0,
\end{equation*}
let $y_I=2x_I-e_I$, then we have 
\begin{equation*}
    \bar{Q}_{II} y_{I}+\bar{Q}_{IK}e_K+\bar{Q}_{II}e_I+\bar{b}_{I}=0,
\end{equation*}

where $\bar{Q}_{II}e_I+\bar{b}_{I}$ is constant for fixed I. Thus we have $\bar{Q}_{II}^s y_{I}^s=\bar{Q}_{II}^{s+1} y_{I}^{s+1}$ for fixed I, and $y_{I}^s$, $\bar{Q}_{II}^s$ means the value of $y_{I}$, $\bar{Q}_{II}^s$ after the sth update of $p$. We want $x_I^{s+1}\leq 0$ or $x_I^{s+1}\geq 1$, which means $|2x_{I}^{s+1}-e_I|\geq 1$. We update $p$ by \eqref{pp1}, then $\bar{Q}_{II}^{s+1}=\bar{Q}_{II}^s-\alpha^s Z_{II}^s$, Z is the diagonal matrix with z as the diagonal element. We have 
$$(\bar{Q}_{II}^s-\alpha^s Z_{II}^s)(2x_{I}^{s+1}-e_I)=\bar{Q}_{II}^s(2x_{I}^s-e_I). $$ Easy to see, $2x_{I}^{s+1}-e_I \rightarrow \infty $ may happens when $\lambda_{min} (\bar{Q}_{II}^s-\alpha^s Z_{II}^s) \longrightarrow 0 $, then the active set can be updated. Therefore we take 
$$
    \alpha^s=\lambda_{min}(z^{s^{-\frac{1}{2}}}\bar{Q}_{II}^sz^{s^{-\frac{1}{2}}}).
$$
While \eqref{eq:alpha} may not be the smallest $\alpha^s$ that induces a change in the active set, we apply a slight scaling factor of $0<\eta<1$ to $\alpha^s$, which is  
 \begin{equation}\label{eq:alpha1}
    \alpha^s=\eta \cdot \lambda_{min}(z^{s^{-\frac{1}{2}}}\bar{Q}_{II}^sz^{s^{-\frac{1}{2}}}).
\end{equation}
The above constitutes our proof.\qed
\end{proof}

\subsection{Initialization}
Within this section, we introduce two initialization techniques: reformation and random perturbation.

\textbf{Reformation.}
Without loss of generality, we initialize the vectorized PSDP by setting $p=0$. For the UBQP problem \eqref{bqp}, leveraging the binary nature of $x \in \{0,1\}^n$, we can reformulate the objective in an equivalent form by introducing an arbitrary vector $\gamma$, yielding
$$
x^{T} (Q+Diag(\gamma)) x + (b-\gamma)^T x.
$$
Although the new $Q$ (i.e. $Q+Diag(\gamma)$) and $b$ (i.e. $b-\gamma$) do not change the solution of problem \eqref{bqp}, the solution of problem \eqref{p1} subject to the constraint $x \in [0,1]^n$ is affected by $\gamma$. Therefore, $\gamma$ will affect the initial solution of $x$ in our algorithm.

As noted in the previous subsection, it is imperative to prevent the iterative point sequence from approaching the boundary too rapidly, as this could impede the discovery of high-quality solutions. Consequently, the selection of an appropriate $\gamma$ is crucial to ensure that the initial solution for $x$ resides within the interior of the feasible region. Notably, this goal can be achieved by setting $\gamma$ sufficiently large. This is due to the inactive nature of the boundary defined by $0\leq x_i\leq 1$, where the first-order condition is satisfied as
\begin{equation}\label{gamma}
   2(Q+Diag(\gamma))x=\gamma-b. 
\end{equation}
Analysis of \eqref{gamma} reveals that as $\gamma$ approaches infinity, $x_i$ converges to $1/2$. To maintain the constraint $0<x_i<1$, an appropriately large $\gamma$ can be chosen. For example, setting $\gamma_i > \sum_{j=1}^{n} 2|Q_{ij}| + |b_{i}|$ for $i=1,\ldots,n$ in the selection of $\gamma$ ensures $0< x_i< 1$, as can be deduced from \eqref{gamma}.

\textbf{Random Perturbation.}
For integer problems (where the elements of $Q$ and $b$ are integers), certain components of the algorithm may occasionally stall around 1/2. Therefore, following the approach of Tang and Toh~\cite{tang2023feasible}, we introduce a symmetric random perturbation from a normal distribution.
\begin{equation}
    Q_{ij}=Q_{ij}+\mu_{ij},
\end{equation}
where $\mu_{ij} \sim \mathbb{N}  (0,\sigma)$ and $\mu_{ij}=\mu_{ji}$.

Due to the continuity of problem \eqref{p1}, small perturbations do not alter the optimal solution. By incorporating random perturbations, the iterations no longer stagnate around 1/2, thereby enhancing the robustness and stability of the algorithm.

Having outlined all the details of the vectorized PSDP algorithm, we now present its framework in Algorithm~\ref{algo3}.
\begin{algorithm}
\caption{Vectorized PSDP for UBQP}\label{algo3}
\begin{algorithmic}[1]
\STATE \textbf{Initialization:} 
\STATE Select a sufficiently large $\gamma$, then update $Q:=Q+Diag(\gamma)$ and $b:=b-\gamma$
\STATE Solve $x^0$ from \eqref{gamma}, and set $p^0=0$ and $s=0$
\WHILE{$\Vert \mathcal P(x^s)-x^s \Vert_\infty>1e^{-5}$}
      \STATE Update $p^{s+1}$ by \eqref{pp1} and \eqref{eq:alpha}
      \STATE Solve problem \eqref{p1} with $p=p^s$ and $x_0=x^s$ using either Algorithm~\ref{algo1} or Algorithm~\ref{algo2} to obtain the optimal solution $x^{s+1}$ based on the stopping criterion \eqref{cret}
      \STATE $s:=s+1$
\ENDWHILE 
\RETURN {The optimal solution $\boldsymbol{x^{\star}}$}
\end{algorithmic}
\end{algorithm}

\section{Theoretical analysis}\label{sec4}
\subsection{Equivalence of the models}
In this subsection, we prove that the penalty problem \eqref{p1} is exact, which means that its optimal solution is also an optimal solution of problem \eqref{o1}, when all the components of $p$ are larger than a certain constant.

To simplify the description of the problem, we rewrite the problem \eqref{o1} as
\begin{equation}\label{o}
    \begin{array}{cl}
\min _{x} & f(x) \\
\text { s.t. } & x \in F .
\end{array}
\end{equation}
where $f(x)=x^{T} Q x+b^{T} x$ and $F=\{x|x \in \{0,1\}^n\}$.
Also we rewrite the problem \eqref{p1} as 
\begin{equation}\label{p}
\begin{array}{cl}
    \min _{x } &h(x, p) \\
    \text { s.t. } & x \in G .
 \end{array}   
\end{equation}
where  $h(x,p):=f(x)+\sum_{i=1}^{n}p_i(x_i-x_i^2) $ and $G=\{x|0\leq x \leq 1\}$.

For the convenience of our proof, let us first propose two lemmas.
\begin{lemma}\label{lemma1}
$$
    \min _{ x \in G} h\left(x,p_{1}\right) \leq \min _{x \in G} h \left(x, p_{2}\right) \leq \min _{x \in F} f(x) ,\forall p_{1} \leq p_{2} .
$$
Proof. For any p we have
$$
\min _{x \in G} h(x,p) \leq \min _{x \in F} h(x, p)=\min _{x \in F} f(x) .
$$
For any  $p_{1} \leq p_{2}$  we have  $p_{2}-p_{1}\leq 0$ , which implies  $\left(p_{2}-p_{1}\right)_i   (x_i-x_i^2) \geq 0$  for all  $ x\in G $ . Consequently, $p_{2i}  (x_i-x_i^2) \geq p_{1i}  (x_i-x_i^2) $ . Thus we have
$$
h \left(x, p_{1}\right) \leq h\left(x, p_{2}\right) ,\quad \forall x\in G .
$$
Then
$$
\min _{x \in G} h\left(x, p_{1}\right) \leq \min _{x \in G} h\left(x,p_{2}\right),
$$
which completes our proof. \qed
\end{lemma}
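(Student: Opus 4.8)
The plan is to prove the chain by splitting it into its two constituent inequalities, each of which follows from an elementary structural observation about how $h(x,p)$ relates to $f(x)$ over the two feasible sets. The two facts I would isolate at the outset are: (i) the binary set $F$ is contained in the box $G$, so $\min_{x\in G}$ of any function is at most its $\min_{x\in F}$; and (ii) on $G$ every coordinate satisfies $x_i-x_i^2=x_i(1-x_i)\ge 0$, so the penalty term $\sum_i p_i(x_i-x_i^2)$ is monotone nondecreasing in each $p_i$.

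For the right inequality $\min_{x\in G}h(x,p_2)\le\min_{x\in F}f(x)$, I would use $F\subseteq G$ to get $\min_{x\in G}h(x,p_2)\le\min_{x\in F}h(x,p_2)$, and then note that every $x\in F$ has binary entries, so $x_i-x_i^2=0$ and the penalty vanishes, giving $h(x,p_2)=f(x)$ on $F$. Chaining these yields the bound. For the left inequality $\min_{x\in G}h(x,p_1)\le\min_{x\in G}h(x,p_2)$ under $p_1\le p_2$, I would first establish the pointwise comparison $h(x,p_1)\le h(x,p_2)$ for every $x\in G$: the difference equals $\sum_i(p_{2,i}-p_{1,i})(x_i-x_i^2)$, each summand of which is a product of a nonnegative factor $p_{2,i}-p_{1,i}\ge 0$ and a nonnegative factor $x_i-x_i^2\ge 0$. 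Evaluating this pointwise inequality at a minimizer $x^\star$ of $h(\cdot,p_2)$ over $G$ then gives $\min_{x\in G}h(x,p_1)\le h(x^\star,p_1)\le h(x^\star,p_2)=\min_{x\in G}h(x,p_2)$.

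There is no substantial obstacle here; the only point requiring care is the sign of the penalty factor, namely that $x_i(1-x_i)\ge 0$ holds precisely because we are on the box $[0,1]$ rather than on all of $\mathbb{R}^n$. This is exactly what makes the penalty nonnegative and monotone in $p$ over $G$, and it is the hinge on which both inequalities turn. Existence of the minima is immediate since $h(\cdot,p)$ is continuous on the compact set $G$ and $F$ is finite, so the argument is complete once the two comparisons above are assembled.
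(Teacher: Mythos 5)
Your proposal is correct and follows essentially the same route as the paper: the right inequality from $F\subseteq G$ together with the vanishing of the penalty on binary points, and the left inequality from the pointwise monotonicity of $h(x,\cdot)$ in $p$ on the box, using $x_i - x_i^2 \ge 0$ for $x\in G$. You even get the sign of $p_2 - p_1$ right, where the paper's text contains a typo writing $p_2 - p_1 \le 0$; no changes needed.
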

\begin{lemma}\label{lemma2}
    If $x^{\star}$ is a local (global) optimal point of problem \eqref{p}, then $x^{\star} \in F$, for any sufficiently large p.
\end{lemma}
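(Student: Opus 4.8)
The plan is to reduce the claim to a one-dimensional, coordinate-wise concavity argument. The decisive structural observation is that the penalty term $\sum_i p_i(x_i - x_i^2)$ contributes $-p_i$ to the coefficient of $x_i^2$ in $h(x,p)$, so by taking each $p_i$ large enough the objective becomes strictly concave along every coordinate direction; and a strictly concave function restricted to an interval cannot attain a minimum at an interior point. This immediately rules out any optimal component strictly between $0$ and $1$.

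First I would fix an index $j$ and view $h(\cdot,p)$ as a function of the single variable $x_j$, holding the remaining coordinates at the values $x^{\star}_k$ ($k\neq j$) of the purported optimum. Writing $f(x)=x^{T}Qx+b^{T}x$ with $Q$ symmetric, the one-variable slice $\phi_j(t):=h\left(x^{\star}+(t-x^{\star}_j)e_j,\,p\right)$ is a quadratic in $t$ whose second derivative is $\phi_j''=2\left(Q_{jj}-p_j\right)$. Hence choosing $p_j>Q_{jj}$ for every $j$ — which is precisely the meaning of ``sufficiently large $p$'' here, and is consistent with the exact-penalty threshold discussed in the preceding subsection — makes each slice $\phi_j$ strictly concave on $[0,1]$.

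Next I would argue by contradiction. Suppose $x^{\star}$ is a local (hence also covering the global) minimizer of \eqref{p} but some component satisfies $x_j^{\star}\in(0,1)$. Because $x_j^{\star}$ is strictly interior, for all sufficiently small $|t|$ the perturbed point $x^{\star}+t\,e_j$ stays in $G$ (the other coordinates are unchanged and feasible, and $x_j^{\star}+t$ remains in $[0,1]$ for small $|t|$ of either sign); local optimality of $x^{\star}$ then forces $t=0$ to be a local minimum of $\phi_j$. This contradicts the strict concavity of $\phi_j$, since a strictly concave function of one variable has no interior local minimum. Therefore every coordinate of $x^{\star}$ must lie in $\{0,1\}$, that is, $x^{\star}\in F$.

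The argument is largely self-contained, and the step requiring the most care is the feasibility of the two-sided perturbation $x^{\star}\pm t\,e_j$: it is exactly the strict interiority $x_j^{\star}\in(0,1)$ that lets us move in both directions while staying in the box, thereby upgrading the feasible-direction information at a local minimum into a genuine one-dimensional local minimum that collides with concavity (at a vertex only one-sided moves are feasible, so no contradiction arises and the conclusion is consistent). The global case needs no separate treatment, since a global minimizer is a fortiori local, and the threshold can be stated explicitly as $p_j>Q_{jj}$ for all $j$.
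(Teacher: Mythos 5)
Your proof is correct and takes essentially the same route as the paper's: both hinge on the negative curvature $Q_{jj}-p_j<0$ along the coordinate direction $e_j$ at any component with $x_j^{\star}\in(0,1)$ once $p_j>Q_{jj}$, and both turn this into a contradiction with local optimality. The only difference is presentational --- the paper invokes the second-order necessary condition via $d^{T}\nabla^{2}L\,d<0$ for the feasible direction $d=e_j$, while you phrase the identical fact as strict concavity of the one-dimensional slice admitting no interior local minimum, which is a slightly more self-contained way of saying the same thing.
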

\begin{proof} 
    If $x^{\star} \notin F$, there at least one $x_i$ satisfies  $0 < x_i < 1$, then exists a sequence feasible direction $d = e_i $ leads to  
\[  
d^T \nabla^2 L(x,u,v)  d = d^T (Q - P^{\star}) d = Q_{ii} - p_i < 0 ,\text{ for } p_i > Q_{ii},  
\]  
where $L(x,u,v)=x^T(Q-P^{\star})x+(b+p)^Tx-u^T x+v^T(x-e)$  is the Lagrangian function function of problem \eqref{p}.
We can observe that it is in contradiction with the second-order necessary condition.\qed
\end{proof}

By leveraging Lemma~\ref{lemma1} and \ref{lemma2}, we can determine the exact penalty of problem \eqref{p}, as demonstrated in Theorem~\ref{thm:3}.

\begin{theorem}\label{thm:3}
If $\bar{x}$ is the optimal point of problem $\eqref{o}$, it is the global optimal point of problem \eqref{p} when $p>\bar{p}$; If $x^{\star} $ is the global optimal point of problem \eqref{p},then it is the optimal point of problem $\eqref{o}$ when $p>\bar{p}$, where $\bar{p}=(Q_{11}, Q_{22},\dots, Q_{nn})$.
\end{theorem}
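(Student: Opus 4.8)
The plan is to obtain both implications from Lemma~\ref{lemma1} and Lemma~\ref{lemma2}, exploiting the elementary observation that the penalty term $\sum_{i=1}^{n} p_i(x_i - x_i^2)$ vanishes exactly on $F$, so that $h(x,p) = f(x)$ for every $x \in F$. The threshold $\bar p = (Q_{11},\dots,Q_{nn})$ is precisely the bound that appears implicitly in the proof of Lemma~\ref{lemma2}: the condition $p_i > Q_{ii}$ is what makes the second-order necessary direction $d = e_i$ satisfy $d^T(Q - P^{\star})d = Q_{ii} - p_i < 0$ at any coordinate with $0 < x_i < 1$. Hence for $p > \bar p$ every local, and in particular every global, minimizer of \eqref{p} must lie in $F$. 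I would state this identification of ``sufficiently large $p$'' with $p > \bar p$ explicitly before starting, since it is the quantitative content the theorem requires.

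First I would record that, because $G = [0,1]^n$ is compact and $h(\cdot,p)$ is continuous, problem \eqref{p} attains its infimum; let $x^{\star}$ be a global minimizer. By Lemma~\ref{lemma2} applied with $p > \bar p$ we have $x^{\star} \in F$, so $h(x^{\star},p) = f(x^{\star})$ and therefore $\min_{x\in G} h(x,p) = f(x^{\star}) \ge \min_{x\in F} f(x)$. Combining this with the right-hand inequality of Lemma~\ref{lemma1}, $\min_{x\in G} h(x,p) \le \min_{x\in F} f(x)$, forces the equality $\min_{x\in G} h(x,p) = \min_{x\in F} f(x)$, and shows en route that $f(x^{\star}) = \min_{x\in F} f(x)$, i.e.\ $x^{\star}$ solves \eqref{o}. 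This settles the second implication.

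For the first implication, I would take $\bar x$ optimal for \eqref{o}. Then $\bar x \in F \subset G$, so $h(\bar x, p) = f(\bar x) = \min_{x\in F} f(x)$, which by the equality just established equals $\min_{x\in G} h(x,p)$; hence $\bar x$ is a global minimizer of \eqref{p}. The individual lemmas carry the analytic weight, so the theorem itself is mainly an orchestration of three ingredients. The point most easily overlooked, and the one I would treat as the main obstacle, is the transfer between ``minimum over $G$'' and ``minimum over $F$'': neither lemma alone equates the two optimal values, and the bridge requires invoking the existence of a global minimizer on the compact set $G$ together with Lemma~\ref{lemma2} to guarantee that this minimizer is binary. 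Once those are in place, the inequality chain closes and both directions follow simultaneously.
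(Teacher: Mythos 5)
Your proof is correct and follows essentially the same route as the paper's: both arguments sandwich $\min_{x\in G}h(x,p)$ between $\min_{x\in F}f(x)$ using the inequality of Lemma~\ref{lemma1} on one side and the binarity of the global minimizer from Lemma~\ref{lemma2} on the other, then read off both implications from the resulting chain of equalities. Your additions (existence of the minimizer via compactness, and the explicit identification of $\bar p$ with the threshold $p_i>Q_{ii}$ implicit in the proof of Lemma~\ref{lemma2}) merely spell out details the paper leaves tacit.
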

\begin{proof}
We have
    $$f(\bar{x})=\underset{x\in F}{\min }  f(x)=\underset{x\in F}{\min }  h(x,p)=\underset{x\in G}{ \min }  h(x,p)=h(x^{\star},p),\text{ for any }p>\bar{p}.
$$
The left equality is from Lemma~\ref{lemma1} and the right one is from Lemma~\ref{lemma2}.\qed
\end{proof}
\subsection{Convergence of the algorithm}
In Algorithm~\ref{algo1}, we apply a proximal point algorithm to solve the subproblem \eqref{p1}, then we present the convergence of the Algorithm~\ref{algo1} in Lemma~\ref{lemma3} and Theorem~\ref{thm2}.

\begin{lemma}\label{lemma3}
Suppose that  $\left\{x^{k}\right\}$ are generated by Algorithm~\ref{algo1}, then
$$
h\left(x^{k+1}, p\right) \leq h\left(x^{k},  p\right)-d^{T} (H_p-Q+P^{\star})d .
$$
\end{lemma}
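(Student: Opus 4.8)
The plan is to read the claimed inequality as a direct consequence of the optimality that defines $d^k$. Recall that Algorithm~\ref{algo1} produces $x^{k+1}=x^k+d^k$, where $d^k$ solves the proximal subproblem \eqref{dp}; that is, $d^k$ minimizes
$$
\Phi(d):=h\left(x^k+d,p\right)+d^T\left(H_p-Q+P^\star\right)d
$$
over the feasible set $\{d:\ x^k+d\in F_1\}$. As recorded just after \eqref{dp}, this objective collapses to the separable quadratic
$$
\Phi(d)=h\left(x^k,p\right)+\nabla h\left(x^k,p\right)^T d+d^T H_p d,
$$
which is strictly convex because $H_p\succ 0$ is diagonal.

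First I would establish that the iterate $d^k$ computed by the projection \eqref{d2} is the genuine constrained minimizer of $\Phi$. Since $\Phi$ separates across coordinates and the feasible box $F_1-x^k$ is a product of intervals, the coordinatewise projection of the unconstrained minimizer $\tilde d^k$ onto $F_1-x^k$ returns the constrained minimizer; this is the standard fact that projecting the unconstrained optimum of a separable strictly convex quadratic onto a box yields the constrained optimum. Granting this, and noting that $d=0$ is feasible because Algorithm~\ref{algo1} keeps $x^k\in F_1$ at every step (the update \eqref{x1} projects onto $F_1$), optimality gives $\Phi(d^k)\le\Phi(0)$. Evaluating both ends,
$$
h\left(x^{k+1},p\right)+(d^k)^T\left(H_p-Q+P^\star\right)d^k=\Phi(d^k)\le\Phi(0)=h\left(x^k,p\right),
$$
and rearranging is exactly the asserted descent inequality with $d=d^k$.

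I expect no real difficulty here: the estimate is a one-line \emph{minimizer-beats-the-zero-step} argument. The only point that deserves care is the justification that the projected point $d^k$ actually solves the constrained subproblem \eqref{dp}, which rests on the separability of $\Phi$ together with the product form of the box. It is worth remarking that since $H_p-Q+P^\star\succeq 0$ (shown just before \eqref{dp}), the subtracted term is nonnegative, so the lemma certifies a monotone decrease of $h(x^k,p)$, which is precisely what is needed to drive the convergence analysis in Theorem~\ref{thm2}.
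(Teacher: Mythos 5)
Your proof is correct and follows essentially the same route as the paper's: both compare the subproblem objective at the minimizer $d^k$ with its value at the feasible point $d=0$ (feasible because $x^k\in F_1$) and rearrange. Your added justification that the coordinatewise projection \eqref{d2} genuinely solves the constrained subproblem \eqref{dp} is a point the paper leaves implicit, but the core argument is identical.
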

\begin{proof}
 Since for all $k$ we have $x^{k}\in F_1$, it can be obtained that
$$
\begin{aligned}
h\left(x^{k+1},  p\right) & =\left(h\left(x^{k}+d^{k}, p\right)+d^{k^T} (H_p-Q+P^{\star}) d^{k}\right)-d^{k^T}   (H_p-Q+P^{\star}) d^{k} \\
& \leq h\left(x^{k}, p\right)-d^{k^T}   (H_p-Q+P^{\star}) d^{k} .
\end{aligned}
$$
Here, the inequality holds due to \eqref{dp}.\qed
\end{proof}

\begin{theorem}\label{thm2}
 Suppose that  $\left\{d^{k}\right\},\left\{x^{k}\right\}$  are generated by Algorithm~\ref{algo1}. Since our feasible region is bounded are bounded, then  $\left\{d^{k}\right\} $ converges to 0 and any cluster point of  $\left\{\left(x^{k}, d^k,u^{k}, v^{k}\right)\right\}$  is a first-order stationary point of problem \eqref{p1}. 
\end{theorem}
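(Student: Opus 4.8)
The plan is to first establish a summable sufficient-decrease estimate that forces $d^{k}\to 0$, and then to pass to the limit in the Karush--Kuhn--Tucker (KKT) system of the subproblem \eqref{dp} to show that every cluster point satisfies the stationarity conditions \eqref{kkt1} of problem \eqref{p1}. A point worth flagging at the outset is that I would \emph{not} rely on Lemma~\ref{lemma3} alone, because the matrix $H_p-Q+P^\star$ is only guaranteed to be positive semidefinite (it dominates a diagonally dominant, possibly singular, matrix), so the estimate $d^{k^T}(H_p-Q+P^\star)d^{k}\to 0$ would not by itself yield $d^{k}\to 0$.

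For the first part I would instead exploit the strict convexity built into the subproblem. Using the identity stated just after \eqref{dp}, the subproblem objective equals $\phi_k(d):=h(x^{k},p)+\nabla h(x^{k},p)^{T}d+d^{T}H_p d$, whose Hessian $2H_p$ satisfies $H_p\succeq\epsilon I$ by construction. Since $d=0$ is feasible (because $x^{k}\in F_1$) and $d^{k}$ is the constrained minimizer over the convex set $F_1-x^{k}$, first-order optimality gives $\nabla\phi_k(d^{k})^{T}(0-d^{k})\ge 0$, and strong convexity then yields $\phi_k(d^{k})\le\phi_k(0)-\epsilon\|d^{k}\|^{2}=h(x^{k},p)-\epsilon\|d^{k}\|^{2}$. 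Combining this with $h(x^{k+1},p)=\phi_k(d^{k})-d^{k^T}(H_p-Q+P^\star)d^{k}\le\phi_k(d^{k})$ produces the sharpened descent $h(x^{k+1},p)\le h(x^{k},p)-\epsilon\|d^{k}\|^{2}$. Since $h(\cdot,p)$ is continuous on the compact box $G=[0,1]^{n}$, it is bounded below, so the nonincreasing sequence $\{h(x^{k},p)\}$ converges and its consecutive differences tend to zero; the sharpened descent then forces $\epsilon\|d^{k}\|^{2}\le h(x^{k},p)-h(x^{k+1},p)\to 0$, i.e. $d^{k}\to 0$. In particular any cluster point has $d$-component zero.

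For the second part I would write out the KKT system of \eqref{dp} at $d^{k}$: with multipliers $u^{k}\ge 0,\ v^{k}\ge 0$ for the constraints $x^{k}+d\ge 0$ and $x^{k}+d\le 1$, stationarity reads $\nabla h(x^{k},p)+2H_p d^{k}-u^{k}+v^{k}=0$, together with $0\le x^{k+1}\le 1$ and the complementarity relations $u_i^{k}x_i^{k+1}=0$ and $v_i^{k}(1-x_i^{k+1})=0$. Because $\{x^{k}\}\subset G$ is bounded and, by complementarity, $u_i^{k}$ and $v_i^{k}$ cannot both be positive (for the same index one has $u_i^k=(\nabla h(x^k,p)+2H_pd^k)_i$ or $v_i^k=-(\cdot)_i$), the multiplier sequences are bounded, so cluster points of $\{(x^{k},d^{k},u^{k},v^{k})\}$ exist. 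Along a convergent subsequence, using $d^{k}\to 0$, $x^{k+1}=x^{k}+d^{k}\to x^\star$, and the continuity of $\nabla h$ and of the complementarity products, every relation passes to the limit and reproduces exactly the stationarity system \eqref{kkt1} for $(x^\star,u^\star,v^\star)$; hence any cluster point is a first-order stationary point of \eqref{p1}.

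I expect the main obstacle to be the first part, namely recognizing that Lemma~\ref{lemma3} as stated is not strong enough, on account of the possible singularity of $H_p-Q+P^\star$, and that one must instead extract the $\epsilon\|d^{k}\|^{2}$ term from the strong convexity of the proximal subproblem through the factor $H_p\succeq\epsilon I$. A secondary technical point is verifying the uniform boundedness of the multiplier sequences $\{u^{k}\}$ and $\{v^{k}\}$, which is what legitimizes passing to the limit in the subproblem KKT system.
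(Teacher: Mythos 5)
Your argument is correct, and the second half (writing the KKT system of the subproblem \eqref{dp}, using complementarity to bound the multipliers, extracting a convergent subsequence, and letting $d^{k}\to 0$ to recover \eqref{kkt1}) is essentially identical to the paper's. Where you genuinely diverge is the first half. The paper deduces $d^{k}\to 0$ from Lemma~\ref{lemma3} via the summability of $d^{k^{T}}(H_p-Q+P^{\star})d^{k}$, asserting that $H_p-Q+P^{\star}\succ 0$; but its own construction only establishes $H_p-Q+P^{\star}\succeq D\succeq 0$ for a weakly diagonally dominant $D$ that can be singular (e.g.\ $Q=\left(\begin{smallmatrix}0&1\\1&0\end{smallmatrix}\right)$, $p=0$ gives $H_p-Q+P^{\star}=\left(\begin{smallmatrix}1&-1\\-1&1\end{smallmatrix}\right)$), so the paper's step does not by itself force $d^{k}\to 0$. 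Your replacement --- extracting the descent $h(x^{k+1},p)\le h(x^{k},p)-\epsilon\|d^{k}\|^{2}$ from the $2\epsilon$-strong convexity of $\phi_k(d)=h(x^{k},p)+\nabla h(x^{k},p)^{T}d+d^{T}H_pd$ together with the variational inequality at $d^{k}$ and the feasibility of $d=0$ --- uses only $H_p\succeq\epsilon I$ and $H_p-Q+P^{\star}\succeq 0$, both of which the construction does guarantee. So your route buys a complete argument where the paper's relies on an unjustified (and in general false) strict positive definiteness claim; the cost is only a slightly longer computation. One cosmetic remark: the paper's \eqref{kkt2} writes $u^{k},v^{k}\le 0$, which is a sign typo given the Lagrangian convention and \eqref{kkt1}; your $u^{k},v^{k}\ge 0$ is the consistent choice.
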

\begin{proof}
    
Based on Lemma~\ref{lemma3}, it is established that
$$
h\left(x^{k+1}, p\right) \leq h\left(x^{0}, p\right)-\sum_{i=1}^{k} d^{i^{T}} (H_p-Q+P^{\star}) d^{i} .
$$
Consequently, due to the boundness of function $h$, we derive
$$
\sum_{k=1}^{\infty} d^{k^{T}}(H_p-Q+P^{\star})   d^{k}<+\infty.
$$
Hence, $d^{k} \rightarrow 0$ as $H_p-Q+P^{\star} \succ 0$.

Returning to our subproblem \eqref{dp}, the KKT conditions are outlined below:
\begin{equation}\label{kkt2}
 \begin{array}{c}
 2(Q-P^{\star})x^k+(b+p)+2H_pd^k-u^k+v^k=0, \\
 u^k,v^k\leq 0, \\
u_k^T(x^k+d^k)=0, \\
v_k^T(e-x^k-d^k)=0, \\
0\leq x^k+d^k\leq 1.\\
\end{array}
\end{equation}
We can see that, for $x_I^{k+1}$, we have $v^k_I=u^k_I=0$; for $x_K^{k+1}$, $u^k_K=0$; for $x_J^{k+1}$, $v^k_J=0$. Thus, $u^{k^T}v=0$, then we have
 $\left\|u^{k}\right\|_{\infty} \leq 2\left\|Q-P^{\star}\right\|_{\infty}+\|b+p\|_{\infty}+\|H p\|_{\infty}$, $
\left\|v^{k}\right\|_{\infty} \leq 2\left\|Q-p^{\star}\right\|_{\infty}+\|b+p\|_{\infty}+\|H p\|_{\infty}$, indicating that $\{(x^k,d^k,u^k,v^k)\}$ is bounded and possesses a convergent subsequence. Assume that $ \{(x^{i_k},d^{i_k},u^{i_k},v^{i_k})\}$ is a subsequence that converges to an arbitrary cluster point $(x^{\star},d^{\star},u^{\star},v^{\star})$. With $d^{k} \rightarrow 0$, it follows that $d^{\star}= 0$. Let $i_{k} \rightarrow \infty$ in \eqref{kkt2}, then $(x^{\star},d^{\star},u^{\star},v^{\star})$ satisfies the KKT conditions \eqref{kkt1}, which implies that it is a first-order stationary point of problem \eqref{p1}.\qed
\end{proof}

In Algorithm~\ref{algo2}, an alternative BB method is adopted, which is non-monotonic, allowing for instances where $h\left(x^{k+1},  p\right)$ may increase during certain iterations. Dai and Fletcher~\cite{2005Projected} provided counterexamples illustrating that the Projected BB and Projected Alternating BB methods, without line searches, could cyclically oscillate between multiple points, and such behavior is atypical. Furthermore, Li and Huang~\cite{li2023note} theoretically proved that the alternative BB method without a monotone line search converges at least R-linearly with a rate of $1-1 / \kappa$ under certain properties for convex unconstrained quadratic problems. However, we have not yet found a relevant theoretical result for such a non-convex problem. In the subsequent section, numerical experiments will be conducted to demonstrate that the PABB method does not exhibit cycling phenomena.

\section{Numerical experiment}\label{sec5}

In this section, we test our proposed algorithms on some UBQP benchmark datasets and random generated datasets. Since our method is derived from the SDP, we compare it with the SDR approach based on the SDPT3 solver~\cite{R2003Solving}, which is used to solve subproblems in~\cite{luo2019enhancing,RePEc:spr:jcomop:v:39:y:2020:i:3:d:10.1007_s10878-019-00517-8}. Furthermore, we contrast our approach with two similar penalty function-based methods~\cite{RePEc:spr:jcomop:v:39:y:2020:i:3:d:10.1007_s10878-019-00517-8,yuan2017exact}, referred to as ALM and MPEC in this paper.
The specific parameters for the five algorithms are outlined as follows:
\begin{itemize}
    \item PPA (Vectorized PSDP method with proximal point algorithm) \\ Algorithm~\ref{algo1} is utilized for solving the subproblems, employing a termination criterion of $10^{-5}$ or a maximum of 10000 iterations. The penalty function is updated for a maximum of 1000 iterations.
    \item PABB (Vectorized PSDP method with projection alternative BB algorithm)\\ Algorithm~\ref{algo2} is employed to address the subproblems, utilizing a termination criterion of $10^{-5}$ or a maximum of 10000 iterations. The penalty function is updated for a maximum of 1000 iterations.
    \item SDR (Semidefinite relaxation)\\
    It is implemented using the SDPT3 solver, with all termination criteria set to $10^{-5}$.
    \item MPEC (Exact penalty method for binary optimization based on MPEC formulation)\\
    All the parameter settings remain consistent with the code provided by Yuan\footnote{https://yuangzh.github.io/}. 
    \item ALM (The augmented Lagrangian algorithms based on continuous relaxation)\\
    All the parameter settings remain consistent with the original papers. We consider $x_0 = pe$, where $p$ is a randomly chosen scalar in the interval $(0,1)$ and $e$ is the vector of all ones.
\end{itemize}

We test our algorithms on a total of six datasets, which are Random Instances, QPLIB\footnote{https://qplib.zib.de/}, BE\footnote{https://biqmac.aau.at/biqmaclib.html} (Billionnet and Elloumi instances), GKA\footnote{https://biqmac.aau.at/biqmaclib.html} (Glover, Kochenberger and Alidaee instances), rudy\footnote{http://www-user.tu-chemnitz.de/$\sim$helmberg/rudy.tar.gz} and the G set\footnote{https://web.stanford.edu/$\sim$yyye/yyye/Gset/}. We evaluate the computational efficiency and solution quality using CPU time and the solution GAP as metrics, where the GAP calculation formula is defined as:
$$\text { GAP }=\left|\frac{\mathrm{obj}-\mathrm{lb}}{\mathrm{lb}}\right| \times 100 \%.$$
where the lower bound ($\mathrm{lb}$) is determined through the utilization of the Biq Mac Solver\footnote{https://biqmac.aau.at/}.

\subsection{Convergence verification}

Due to the absence of any line search, it is challenging to provide the theoretical convergence analysis for Algorithm \ref{algo2}. Here, we validate its practical convergence through numerical experiments. We present one instance from each of the six datasets, and Fig.\ref{fig5} illustrates the number of iterations used by Algorithm \ref{algo2} in solving the subproblems. It can be observed from the figure that the number of iterations for the subproblems is mostly within 1000. For the instance with the QPLIB ID 3565, there is a small probability that the number of iterations for the subproblems exceeds 2000. However, even in such cases, all iteration counts remain below 6000, not reaching the maximum of 10000 iterations set for the experiments. This indicates to some extent that the PABB method without line search does not exhibit cycling, confirming the convergence of Algorithm \ref{algo2} in practice.

Next, we validate the overall convergence of the two vectorized PSDP algorithms (PPA and PABB) in numerical experiments. Fig.\ref{fig4} illustrates the convergence process of the two algorithms in reducing the number of components violating the 0-1 constraints over six test cases. While not strictly monotonically decreasing, the overall trend is toward reduction and convergence to zero. This indicates that our algorithms ultimately converge to feasible points satisfying the 0-1 constraints. 

\begin{figure}[htbp]  
    \centering % 使所有内容居中 
 
    \begin{minipage}[c]{0.33\linewidth}  
        \vspace{3pt}  
        \centering % 使用 \centering 而不是 \centerline  
        \includegraphics[width=\textwidth]{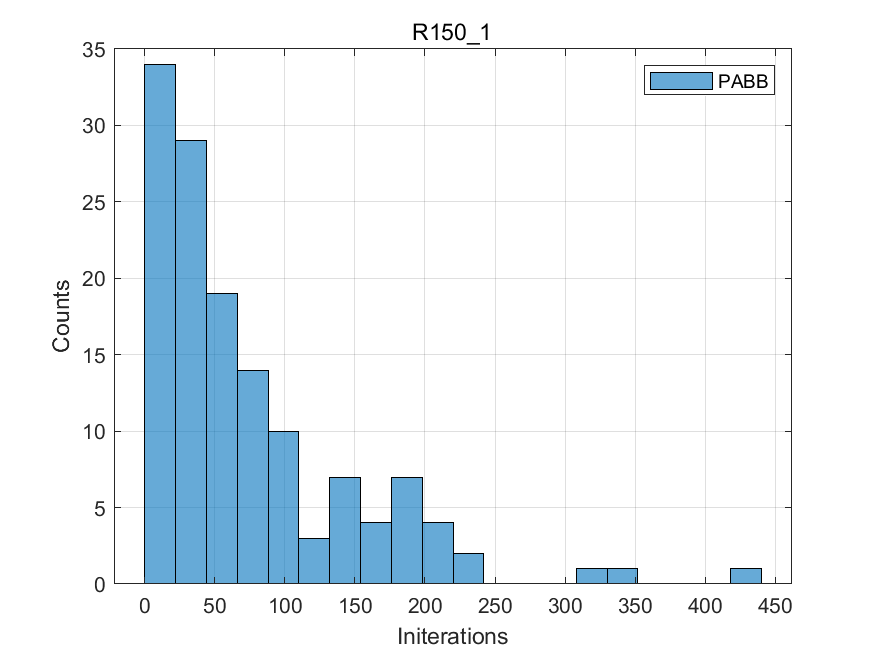}  
        (a) Random datasets  
    \end{minipage}  
    \hfill % 使用 \hfill 来填充空白，使 minipage 之间等距  
    \begin{minipage}[c]{0.325\linewidth}  
        \vspace{3pt}  
        \centering  
        \includegraphics[width=\textwidth]{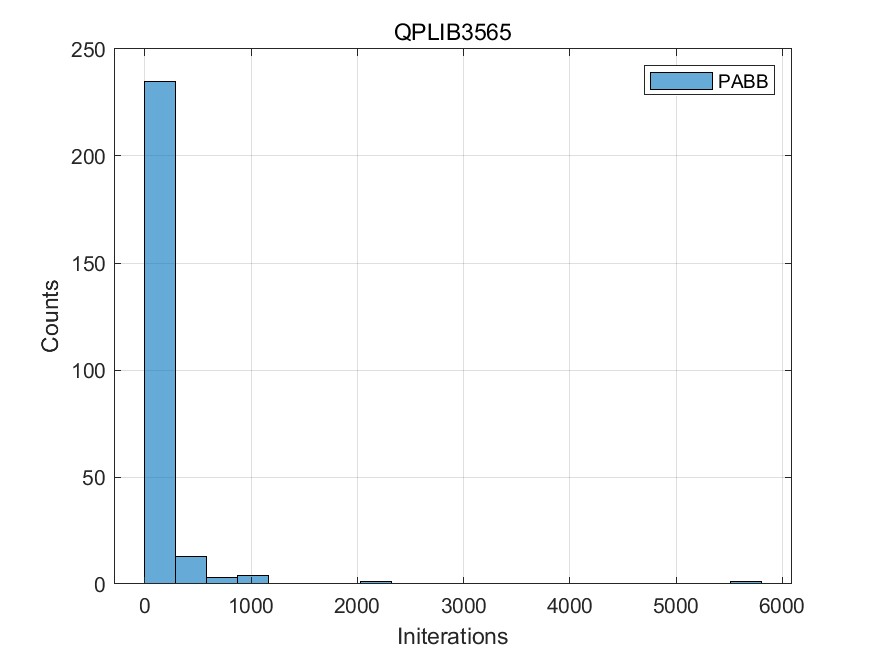}  
        (b) QPLIB datasets  
    \end{minipage}  
    \hfill  
    \begin{minipage}[c]{0.325\linewidth}  
        \vspace{3pt}  
        \centering  
        \includegraphics[width=\textwidth]{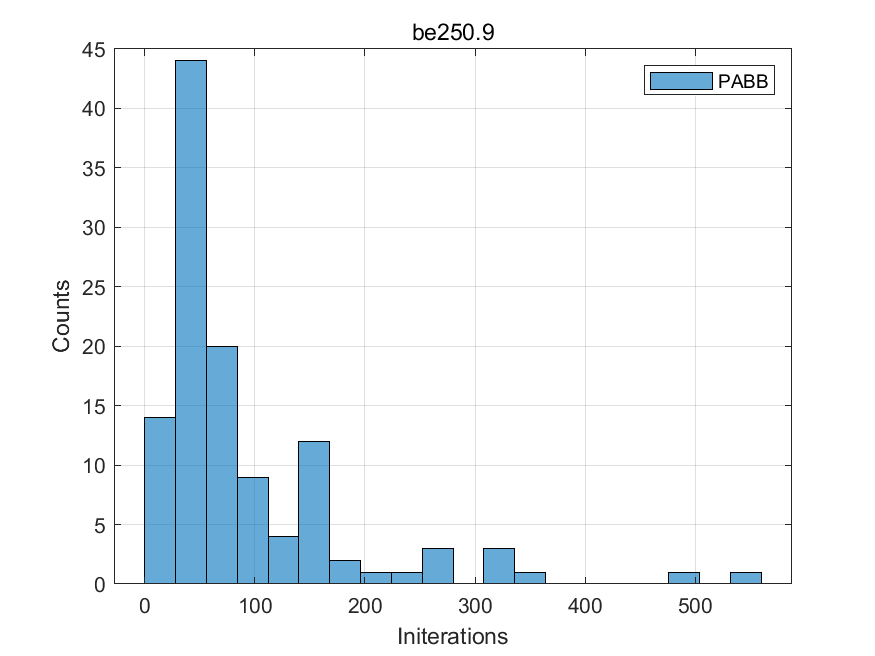}  
        (c) BE datasets  
    \end{minipage}  
\hfill
    \begin{minipage}[c]{0.33\linewidth}  
        \vspace{3pt}  
        \centering  
        \includegraphics[width=\textwidth]{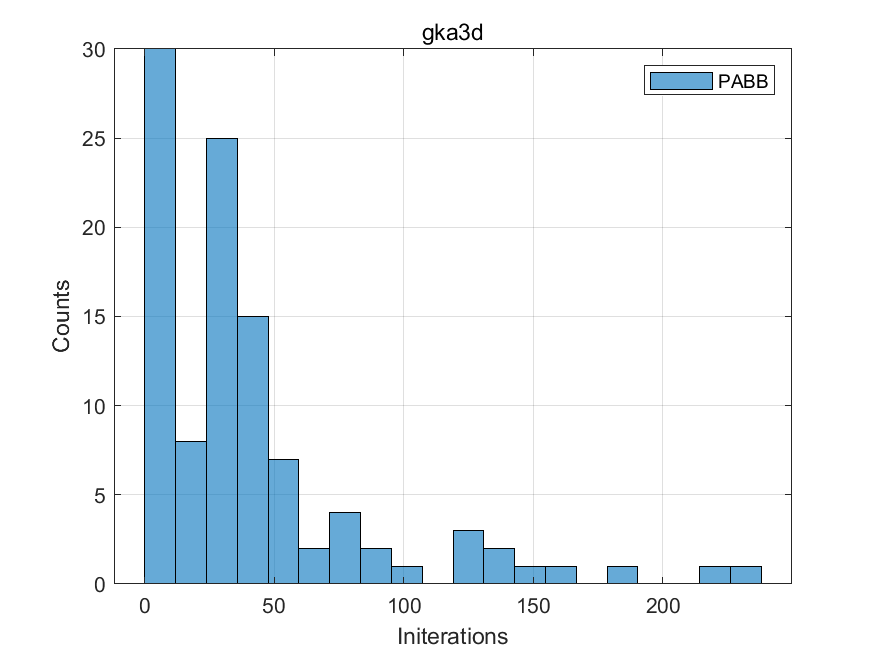}  
        (d) GKA datasets  
    \end{minipage}  
    \hfill  
    \begin{minipage}[c]{0.33\linewidth}  
        \vspace{3pt}  
        \centering  
        \includegraphics[width=\textwidth]{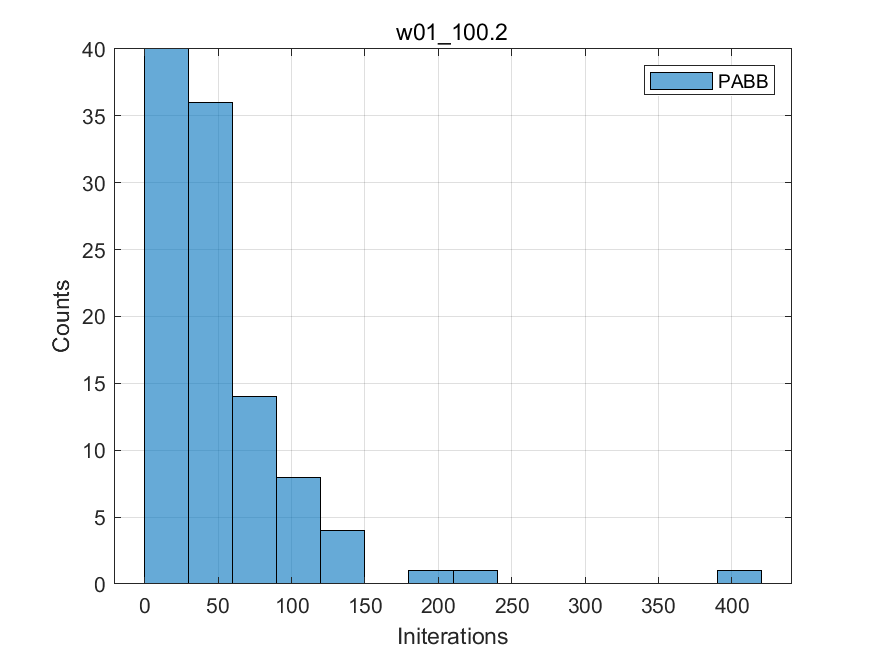}  
        (e) Rudy datasets  
    \end{minipage}  
    \hfill  
    \begin{minipage}[c]{0.32\linewidth}  
        \vspace{3pt}  
        \centering  
        \includegraphics[width=\textwidth]{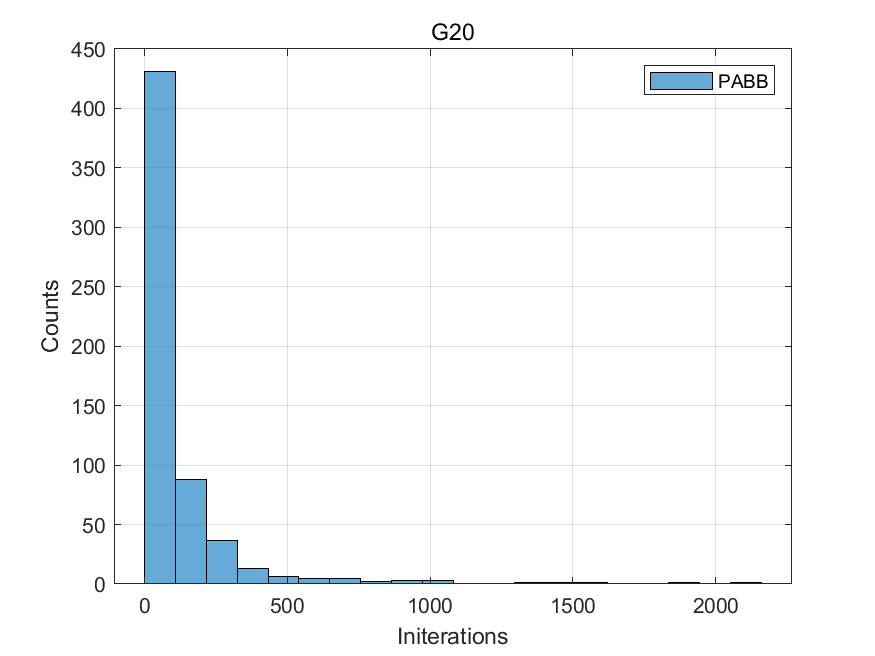}  
        (f) G sets  
    \end{minipage}  
    \caption{Histogram of the number of iterations for subproblems }  
    \label{fig5}  
\end{figure}

\begin{figure}[htbp]  
    \centering % 使所有内容居中 

    \begin{minipage}[c]{0.33\linewidth}  
        \vspace{3pt}  
        \centering % 使用 \centering 而不是 \centerline  
        \includegraphics[width=\textwidth]{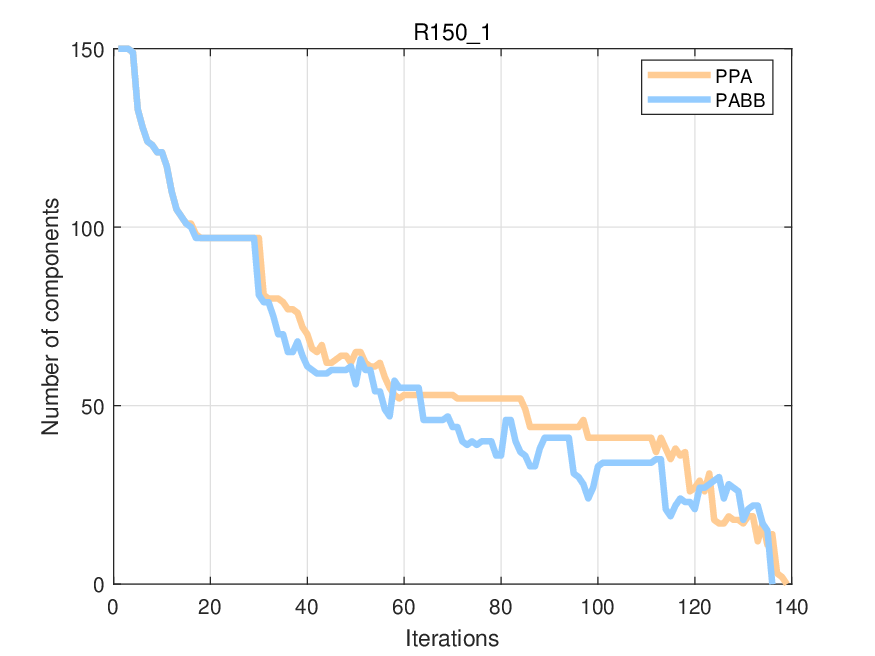}  
        (a) Random datasets  
    \end{minipage}  
    \hfill % 使用 \hfill 来填充空白，使 minipage 之间等距  
    \begin{minipage}[c]{0.325\linewidth}  
        \vspace{3pt}  
        \centering  
        \includegraphics[width=\textwidth]{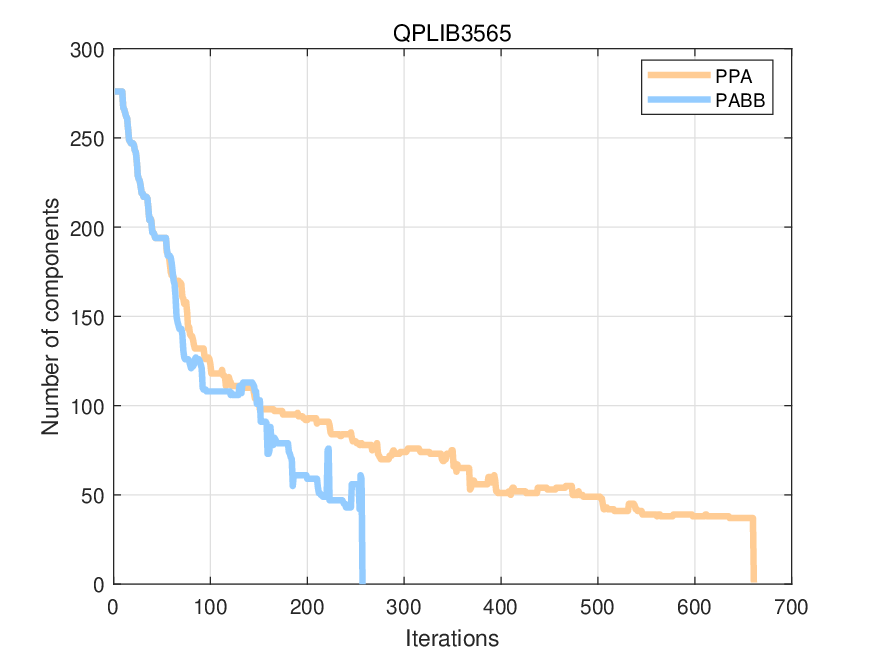}  
        (b) QPLIB datasets  
    \end{minipage}  
    \hfill  
    \begin{minipage}[c]{0.325\linewidth}  
        \vspace{3pt}  
        \centering  
        \includegraphics[width=\textwidth]{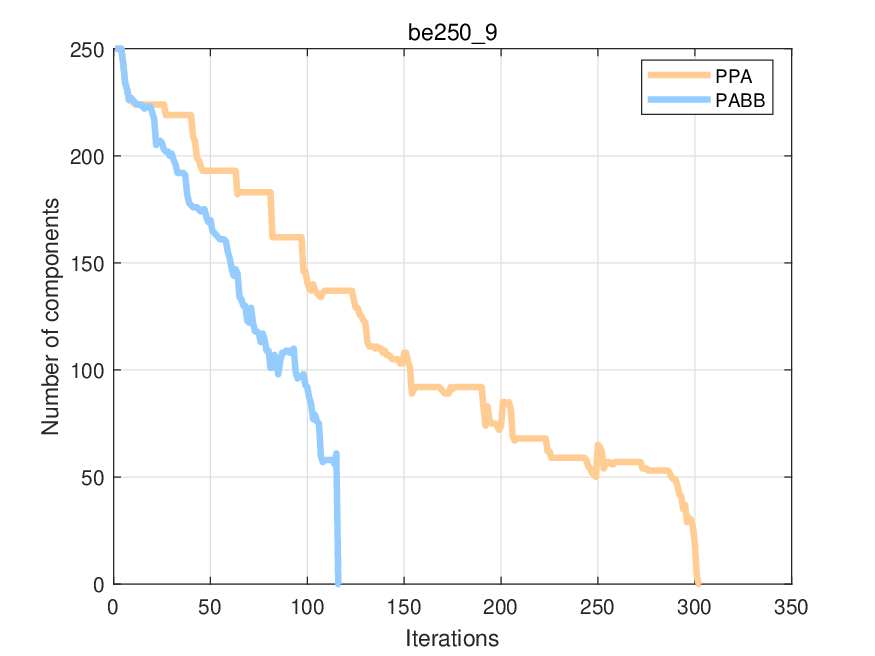}  
        (c) BE datasets  
    \end{minipage}  
\hfill
    \begin{minipage}[c]{0.33\linewidth}  
        \vspace{3pt}  
        \centering  
        \includegraphics[width=\textwidth]{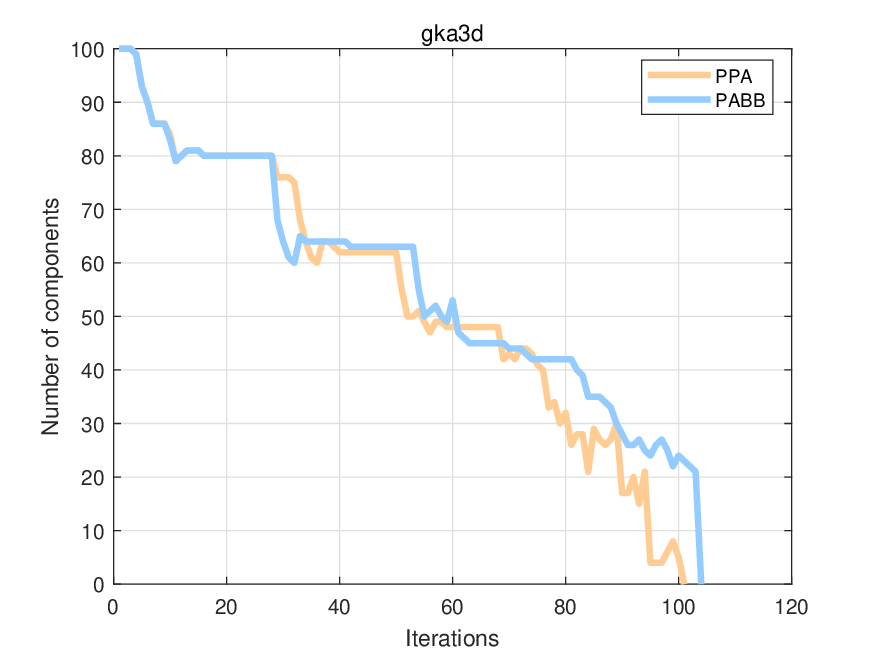}  
        (d) GKA datasets  
    \end{minipage}  
    \hfill  
    \begin{minipage}[c]{0.33\linewidth}  
        \vspace{3pt}  
        \centering  
        \includegraphics[width=\textwidth]{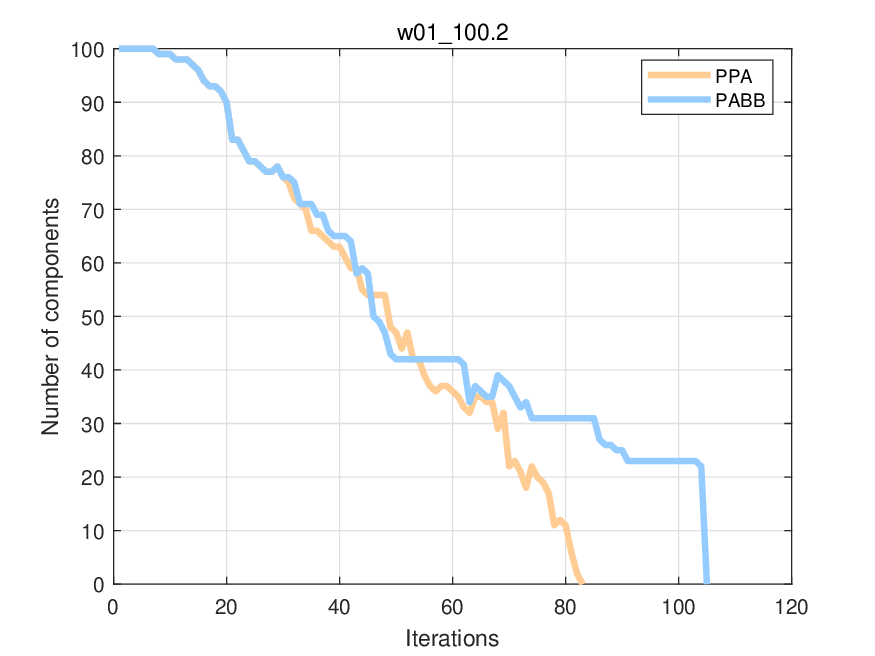}  
        (e) Rudy datasets  
    \end{minipage}  
    \hfill  
    \begin{minipage}[c]{0.32\linewidth}  
        \vspace{3pt}  
        \centering  
        \includegraphics[width=\textwidth]{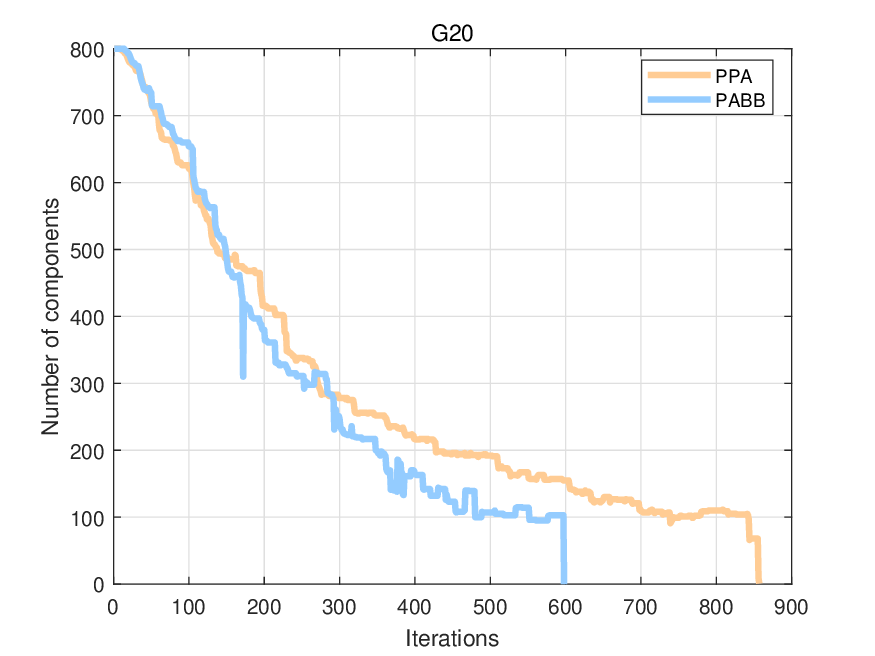}  
        (f) G sets  
    \end{minipage}  
    \caption{The number of variables dissatisfied the constraint conditions }  
    \label{fig4}  
\end{figure}

\subsection{0-1  quadratic  problems}
The UBQP problem to be addressed is formulated as follows:
$$
\min \left\{y^{T} Q y: y \in\{0,1\}^{n}\right\},
$$
where  Q  is a symmetric matrix of order $n$.
For this problem, we conducted experiments on four datasets, namely: random instances, QPLIB, BE, and GKA.

\textbf{(Random Instances)} We randomly generate the symmetric matrix $Q$ with dimensions of 50, 100, 150, 200, and 250. For each dimension, we create five test cases. Due to the stochastic nature of some algorithms, we run each algorithm 10 times for each test case and report the average GAP and CPU time. In cases where the runtime of the ALM algorithm exceeds 100 seconds or yields a non-optimal trivial solution of 0, we classify such instances as failures.

The detailed results are summarized in Table~\ref{t1}. We observe that the five algorithms do not exhibit significant differences in terms of runtime, with the SDR algorithm running relatively slower. In terms of solution quality, PPA achieves the smallest GAP with the lower bound, closely followed by PABB. However, ALM fails on average 1 to 4 times in each test case.

\begin{table}[htbp]
    \centering
    \caption{Experimental results of Random Instances in dimensions of 50, 100, 150, 200, and 250 }
    \label{t1}
    \setlength\tabcolsep{0.9pt}
    \begin{tabular*}{\textwidth}{@{\extracolsep\fill}lccccccccccl}
\toprule  
~& \multicolumn{5}{c}{CPU Time(s)} & \multicolumn{5}{c}{GAP(\%)} &Fall \\ \cmidrule{2-6}\cmidrule{7-11}%
n & PPA & PABB & ALM &MPEC & SDR & PPA & PABB & ALM&MPEC & SDR  &  ALM \\
\midrule
   50 & 0.13 & \textbf{0.05} & 0.13 & 0.33 & 0.42 & \textbf{0.51} & 0.53 & 2.09 & 0.80 & 2.19 & 4.40 \\ 
        100 & 0.38 &\textbf{ 0.10} & 0.20 & 0.23 & 0.44 & 0.15 & \textbf{0.07} & 2.63 & 0.95 & 3.56 & 3.00 \\ 
        150 & 1.14 & \textbf{0.20} & 0.30 & 0.26 & 0.56 & \textbf{0.21} & 0.23 & 2.72 & 0.79 & 2.98 & 1.80 \\ 
        200 & 3.21 & 0.46 & 0.46 & \textbf{0.29} & 0.86 & \textbf{0.14} & 0.21 & 2.04 & 0.82 & 4.33 & 1.80 \\ 
        250 & 5.07 & 0.65 & 0.63 & \textbf{0.34} & 1.17 & \textbf{-0.15} & -0.12 & 1.52 & 0.50 & 3.20 & 1.60 \\ 
\hline
\end{tabular*}
\end{table}

Additionally, we consider some lower-dimensional matrices with dimensions of 20, 40, 60, and 80, and apply the branch-and-bound method to obtain the optimal solution as the lower bound (lb). As shown in Table~\ref{t2}, PABB is relatively faster in terms of time, with a smaller GAP compared to the optimal value, and even the maximum GAP is only 1.02\%. PPA is more accurate, achieving an average GAP of just 0.02\% when solving examples with 80 dimensions. In contrast, ALM has a probability of failure during the solving process, and even when successful, the GAP is relatively high. The MPEC solution is also accurate, but with longer run times. On the other hand, the SDR algorithm has slower solving speeds and relatively lower solution quality.

\begin{table}[htbp]
    \centering
    \caption{Experimental results of Random Instances in dimensions of 20, 40, 60 and 80}
    \label{t2}
    \setlength\tabcolsep{0.9pt}
    \begin{tabular*}{\textwidth}{@{\extracolsep\fill}lccccccccccl}
\toprule  
~& \multicolumn{5}{c}{CPU Time(s)} & \multicolumn{5}{c}{GAP(\%)} &Fall \\ \cmidrule{2-6}\cmidrule{7-11}%
n & PPA & PABB & ALM &MPEC & SDR & PPA & PABB & ALM&MPEC & SDR  &  ALM \\
\midrule
   20 & 0.03 & \textbf{0.02} & 0.09 & 0.34 & 0.47 & 0.09 & \textbf{0.06} & 0.60 & 0.09 & 1.31 & 3.20 \\ 
        40 & 0.11 & \textbf{0.05} & 0.13 & 0.37 & 0.44 & \textbf{1.00} & 1.02 & 2.20 & 1.48 & 4.87 & 4.60 \\ 
        60 & 0.17 & \textbf{0.07} & 0.17 & 0.44 & 0.52 & \textbf{0.09} & 0.12 & 2.75 & 0.35 & 1.96 & 1.40 \\ 
        80 & 0.45 & \textbf{0.15} & 0.41 & 0.42 & 0.66 & \textbf{0.02} & 0.13 & 1.77 & 0.95 & 2.41 & 1.40 \\  
\hline
\end{tabular*}
\end{table}

This observation suggests that the two vectorized PSDP methods have a clear advantage in solution quality over other approaches. PABB generally outperforms PPA in speed, effectively speeding up the process.

\textbf{(QPLIB)}
The QPLIB is a library of quadratic programming instances, which contains 319 discrete and 134 continuous instances of different characteristics. We conduct testing on all binary instances, totaling 23 cases, with dimensions ranging from 120 to 1225 in the QPLIB dataset.

As shown in Table~\ref{t3}, MPEC emerges as the fastest approach, with an average of 0.49 seconds per instance. PABB and ALM follow closely with an average of about 1 second per instance. PPA and SDR take longer. In terms of solution quality, PPA and PABB maintain their superiority.

\begin{table}[htbp]
    \centering
    \caption{Experimental results of QPLIB Instances}
    \label{t3}
    \setlength\tabcolsep{0.9pt}
    \begin{tabular*}{\textwidth}{@{\extracolsep\fill}llccccccccccl}
\toprule  
~& ~&\multicolumn{5}{c}{CPU Time(s)} & \multicolumn{5}{c}{GAP(\%)} &Fall \\ \cmidrule{3-7}\cmidrule{8-12}%
QPLIB &n & PPA & PABB & ALM &MPEC & SDR & PPA & PABB & ALM&MPEC & SDR  &  ALM \\
\midrule
     QPLIB5881 & 120 & 0.48 & \textbf{0.17} & 0.44 & 0.32 & 0.60 & 0.40 & \textbf{0.39} & 3.05 & 1.16 & 3.49 & 7 \\ 
        QPLIB5882 & 150 & 5.40 & 0.61 & 0.83 & \textbf{0.35} & 0.62 & \textbf{0.12} & \textbf{0.12} & 1.65 & 1.46 & 2.98 & 4 \\ 
        QPLIB5875 & 200 & 10.81 & 1.17 & 1.26 & \textbf{0.38} & 0.91 & \textbf{0.00} & 0.04 & 1.12 & 0.75 & 3.00 & 5 \\ 
        QPLIB3852 & 231 & 1.61 & 0.44 & 0.38 & \textbf{0.37} & 1.03 & \textbf{0.34} & \textbf{0.34} & 1.45 & 2.56 & 5.13 & 0 \\ 
        QPLIB5909 & 250 & 1.44 & 0.52 & 0.85 & \textbf{0.35} & 1.19 & \textbf{0.27} & 0.52 & 3.59 & 1.45 & 5.88 & 4 \\ 
        QPLIB3565 & 276 & 1.45 & 0.30 & \textbf{0.29} & 0.30 & 0.87 & 1.35 & 1.28 & \textbf{1.26} & 2.27 & 8.51 & 1 \\ 
        QPLIB5721 & 300 & 14.24 & 5.81 & 3.41 & \textbf{0.66} & 1.37 & 2.74 & 1.70 & 1.01 & 1.93 & \textbf{0.94} & 6 \\ 
        QPLIB3745 & 325 & 2.35 & 0.60 & 0.52 & \textbf{0.43} & 1.37 & 1.14 & \textbf{1.08} & 1.86 & 2.40 & 5.39 & 1 \\ 
        QPLIB5725 & 343 & 1.05 & \textbf{0.50} & 1.25 & 0.58 & 1.57 & 4.51 & 2.38 & 2.12 & 2.27 & 11.09 & 7 \\ 
        QPLIB3705 & 378 & 2.78 & 0.72 & 0.54 & \textbf{0.44} & 1.94 & \textbf{0.62} & 1.04 & 2.19 & 1.04 & 9.90 & 0 \\ 
        QPLIB5755 & 400 & 1.49 & 0.54 & 1.34 & \textbf{0.47} & 2.09 & 3.78 & 2.19 & \textbf{1.72} & 2.17 & 11.08 & 6 \\
        QPLIB3738 & 435 & 3.88 & 0.63 & 1.26 & \textbf{0.49} & 2.30 & \textbf{1.52} & 1.66 & 6.30 & 2.84 & 4.74 & 3 \\ 
        QPLIB3506 & 496 & 3.55 & 0.79 & 0.59 & \textbf{0.43} & 2.29 & \textbf{0.84} & 0.92 & 1.57 & 1.67 & 10.88 & 2 \\ 
        QPLIB5922 & 500 & 9.71 & 1.64 & 2.07 & \textbf{0.41} & 3.91 & \textbf{0.15} & 0.18 & 1.38 & 0.56 & 1.92 & 5 \\ 
        QPLIB3832 & 561 & 4.48 & 0.91 & 0.58 & \textbf{0.49} & 3.61 & \textbf{1.12} & 2.02 & 1.77 & 2.53 & 6.86 & 0 \\ 
        QPLIB3877 & 630 & 5.15 & 0.97 & 0.71 & \textbf{0.55} & 4.53 & \textbf{1.10} & 1.40 & 2.18 & 1.33 & 7.31& 1 \\ 
        QPLIB3706 & 703 & 6.17 & 0.95 & 0.78 & \textbf{0.54} & 5.17 & 2.11 & \textbf{1.50} & 1.97 & 2.05 & 5.87 & 3 \\ 
        QPLIB3838 & 780 & 7.47 & 1.07 & 0.95 & \textbf{0.59} & 7.41 & \textbf{2.36} & 2.79 & 3.25 & 2.95 & 8.58 & 1 \\ 
        QPLIB3822 & 861 & 7.64 & 1.20 & 1.15 & \textbf{0.56} & 8.37 & 1.34 & \textbf{0.85} & 2.09 & 1.18 & 3.76 & 0 \\ 
        QPLIB3650 & 946 & 8.22 & 1.75 & 1.23 & \textbf{0.64} & 10.86 & 1.02 & 1.46 & 2.28 & \textbf{0.65} & 5.00 & 4 \\ 
        QPLIB3642 & 1035 & 7.05 & 1.48 & 1.11 & \textbf{0.56} & 10.27 & 1.59 & 1.80 & 2.36 & \textbf{1.16} & 8.70 & 1 \\ 
        QPLIB3693 & 1128 & 10.14 & 2.02 & 1.63 & \textbf{0.67} & 14.90 & \textbf{0.80} & 1.39 & 1.44 & 1.92 & 8.01 & 2 \\ 
        QPLIB3850 & 1225 & 10.41 & 1.61 & 1.91 & \textbf{0.75} & 18.35 & \textbf{0.96} & 1.50 & 1.29 & 1.53 & 6.72 & 1 \\ \hline
        Average & ~ & 5.52 & 1.15 & 1.09 & \textbf{0.49} & 4.59 & 1.31 & \textbf{1.24} & 2.13 & 1.73 & 6.34 & 3  \\ 
 \hline
\end{tabular*}
\end{table}

\textbf{(GKA)} These datasets can also be obtained from the OR-Library\footnote{http://people.brunel.ac.uk/$\sim$mastjjb/jeb/info.html}, denoted as sets a, b, c, d, e, f. The dataset parameters are generated using the Pardalos-Rodgers generators introduced by Pardalos and Rodgers~\cite{Pardalos1990ComputationalAO}. The dimensions of this dataset range from 40 to 500, with densities ranging from 0 to 1.

It is clear from Table \ref{t5} that for \( n \leq 100 \), PABB takes the least amount of computational time. However, as \( n \) increases to 200 and 500, the computational time of PABB exceeds that of MPEC. The solution quality produced by PPA and PABB is generally superior to that of the other methods, particularly in many instances where \( n \leq 200 \), their GAPs are one-tenth that of MPEC.

\begin{table}[htbp]
    \centering
    \caption{Experimental results of GKA Instances}
    \label{t5}
    \setlength\tabcolsep{0.88pt}
    \begin{tabular*}{\textwidth}{@{\extracolsep\fill}llccccccccccl}
\toprule  
~& ~&\multicolumn{5}{c}{CPU Time(s)} & \multicolumn{5}{c}{GAP(\%)} &Fall \\ \cmidrule{3-7}\cmidrule{8-12}%
GKA&n & PPA & PABB & ALM &MPEC & SDR & PPA & PABB & ALM&MPEC & SDR  &  ALM \\
\midrule
  gka1a & 50 & 0.07 & \textbf{0.03} & 0.15 & 0.19 & 0.29 & 0.527 & 0.539 & 0.849 & \textbf{0.000} & 0.996 & 9 \\ 
        gka2a & 60 & 0.04 & \textbf{0.02} & 0.19 & 0.22 & 0.36 & \textbf{0.000} & \textbf{0.000} & 0.099 & \textbf{0.000} & 1.039 & 4 \\ 
        gka3a & 70 & 0.11 & \textbf{0.05} & 0.19 & 0.25 & 0.30 & \textbf{0.023} & 0.045& 0.182 & 0.646 & 2.418 & 3 \\ 
        gka4a & 80 & 0.04 & \textbf{0.02} & 0.22 & 0.25 & 0.32 & \textbf{0.000} & 0.026 & 0.803 & 0.233 & 0.640 & 1 \\ 
        gka5a & 50 & 0.07 & \textbf{0.02} & 0.22 & 0.30 & 0.40 & 0.052 &\textbf{0.047} & 0.854 & 0.052 & 2.911 & 5 \\ 
        gka6a & 30 & 0.04 & \textbf{0.02} & 0.15 & 0.28 & 0.37 & \textbf{0.000} & 0.018 & 6.583 & \textbf{0.000} & 0.176 & 8 \\ 
        gka7a & 30 & 0.04 & \textbf{0.02} & 0.14 & 0.28 & 0.37 & \textbf{0.000} & \textbf{0.000} & 2.510 & \textbf{0.000} & \textbf{0.000} & 5 \\ 
        gka8a & 100 & 0.07 & \textbf{0.02} & 0.28 & 0.33 & 0.51 & \textbf{0.000} & \textbf{0.000} & 0.216 & \textbf{0.000} & 0.522 & 3 \\ 
        gka1c & 40 & 0.04 & \textbf{0.01} & 0.12 & 0.23 & 0.25 & \textbf{0.000} & \textbf{0.000} & \textbf{0.000} & \textbf{0.000} & \textbf{0.000} & 7 \\ 
        gka2c& 50 & 0.07 & \textbf{0.02} & 0.19 & 0.29 & 0.31 & \textbf{0.000} & \textbf{0.000} & \textbf{0.000} & 0.306 & 0.998 & 5 \\ 
        gka3c & 60 & 0.08 & \textbf{0.03} & 0.24 & 0.27 & 0.42 & \textbf{0.000} & 0.008 & 0.900 & 0.255 & 0.255 & 4 \\ 
        gka4c & 70 & 0.05 & \textbf{0.02} & 0.16 & 0.18 & 0.29 & \textbf{0.000} & \textbf{0.000} & 0.378 & 0.068 & 0.825 & 8 \\ 
        gka5c & 80 & 0.12 & \textbf{0.06} & 0.25 & 0.31 & 0.44 & \textbf{0.000} & \textbf{0.000} & 1.616 & 0.435 & 1.372 & 6 \\ 
        gka6c & 90 & 0.10 & \textbf{0.05} & 0.27 & 0.30 & 0.48 & \textbf{0.000} & \textbf{0.000} & 0.017& 0.343 & 1.185 & 3 \\ 
        gka7c & 100 & 0.12 & \textbf{0.05} & 0.26 & 0.44 & 0.53 & \textbf{0.000} & \textbf{0.000} & 0.720 & 1.522 & 1.107& 5 \\ 
        gka1d & 100 & 0.12 & \textbf{0.04} & 0.24 & 0.19 & 0.37 & 0.174 & \textbf{0.025} & 1.026 & 2.447 & 2.100 & 3 \\ 
        gka2d & 100 & 0.29 & \textbf{0.09} & 0.33 & 0.25 & 0.48 & \textbf{0.108} & 0.134 & 1.844 & 1.170 & 7.098 & 4 \\ 
        gka3d & 100 & 0.30 & \textbf{0.10} & 0.34 & 0.37 & 0.52 & 0.173 & \textbf{0.080} & 5.129 & 0.173 & 1.447 & 4 \\ 
        gka4d & 100 & 0.25 & \textbf{0.06} & 0.24 & 0.19 & 0.35 & \textbf{0.021} & 0.062 & 1.231 & 0.121 & 1.603 & 3 \\ 
        gka5d & 100 & 0.36 & \textbf{0.08} & 0.31 & 0.36 & 0.52 & \textbf{0.095} & 0.415 & 1.338 & 3.140 & 4.602 & 4 \\ 
        gka6d & 100 & 0.21 & \textbf{0.07} & 0.29 & 0.34 & 0.46 & 0.352 & \textbf{0.000} & 2.119& 1.330 & 3.076 & 3 \\ 
        gka7d & 100 & 0.40 & \textbf{0.09} & 0.28 & 0.35 & 0.51 & 0.015 & 0.089 & 2.344 & \textbf{0.000} & 4.255 & 4 \\ 
        gka8d & 100 & 0.25 & \textbf{0.04} & 0.30 & 0.34 & 0.49 & \textbf{0.000} & 0.217 & 1.137 & 0.183 & 2.312 & 3 \\ 
        gka9d & 100 & 0.33 & \textbf{0.07} & 0.30 & 0.49 & 0.48 & \textbf{0.006} & 0.011 & 1.234 & 1.207& 2.549 & 5 \\ 
        gka10d & 100 & 0.45 & \textbf{0.08} & 0.30 & 0.21 & 0.40 & \textbf{0.042} & 0.138 & 0.853 & 0.832 & 1.335 & 2 \\ 
        gka1e & 200 & 0.85 & \textbf{0.23} & 0.68 & 0.25 & 0.81 & \textbf{0.081} & 0.165 & 1.252 & 1.039 & 1.761 & 3 \\ 
        gka2e & 200 & 1.77 & 0.33 & 1.00 & \textbf{0.26} & 1.00 & \textbf{0.086} & 0.102 & 1.888& 0.816 & 2.013 & 5 \\ 
        gka3e & 200 & 1.99 & 0.33 & 0.95 & \textbf{0.28} & 1.00 & 0.136 & \textbf{0.057} & 2.733 & 0.382 & 2.892& 1 \\ 
        gka4e & 200 & 1.89 & 0.24 & 0.77 & \textbf{0.23} & 0.82 & 0.044 & \textbf{0.034} & 1.992 & 0.337 & 1.365 & 5 \\ 
        gka5e & 200 & 3.18 & \textbf{0.35} & 0.85 & 0.39 & 0.94 & \textbf{0.071} & 0.132 & 2.402 & 1.155 & 5.402 & 3 \\ 
        gka1f & 500 & 11.34 & 2.12 & 4.60 & \textbf{0.33} & 3.42 & 3.581 & 3.669 & 4.750 & 4.363 & 6.883 & 5 \\ 
        gka2f & 500 & 20.15 & 1.91 & 4.77 & \textbf{0.30} & 3.63 & \textbf{4.601} & 4.691 & 6.847 & 5.351\ & 6.832 & 4 \\ 
        gka3f & 500 & 33.20 & 2.47 & 5.18 & \textbf{0.34} & 2.81 & 5.238 & \textbf{5.212} & 7.272 & 6.245 & 8.394 & 4 \\ 
        gka4f & 500 & 37.94 & 2.37 & 5.07 & \textbf{0.45} & 3.58 & 4.945 & \textbf{4.942} & 8.309 & 6.032 & 7.902 & 5 \\ 
        gka5f & 500 & 43.53 & 2.10 & 5.13 & \textbf{0.51} & 3.65 & \textbf{5.321} & 5.450 & 9.637 & 5.724 & 9.563 & 4 \\ \hline
        AVERAGE & ~ & 4.57 & 0.39 & 1.00 & \textbf{0.30} & 0.91 & \textbf{0.734} & 0.752 & 2.316 & 1.312 & 2.795 & 4 \\ \hline
\end{tabular*}
\end{table}

\textbf{(BE)}
There are a total of 8 categories, each with 10 instances in Billionnet and Elloumi instances. These instances have dimensions of 20, 40, 60, and 80, with densities ranging from 0.1 to 1.0. The diagonal coefficients of the instances fall within the range [-100, 100], while the off-diagonal coefficients range from [-50, 50]. In Table~\ref{t4}, we consider the solution presented in the article~\cite{BENLIC20131162} as the lower bound (lb).

It is evident that both PABB and MPEC have shorter computation times compared to other methods. In the majority of cases, PABB has a shorter runtime than MPEC. Furthermore, PABB and PPA show a clear advantage in terms of GAP compared to other methods, with a GAP typically below 0.1\%.
\begin{table}[htbp]
 \centering
    \caption{Experimental results of BE Instances}
    \label{t4}
    \setlength\tabcolsep{0.9pt}
    \begin{tabular*}{\textwidth}{@{\extracolsep\fill}llccccccccccl}
\toprule  
~& ~&\multicolumn{5}{c}{CPU Time(s)} & \multicolumn{5}{c}{GAP(\%)} &Fall \\ \cmidrule{3-7}\cmidrule{8-12}%
n &Density& PPA & PABB & ALM &MPEC & SDR & PPA & PABB & ALM&MPEC & SDR  &  ALM \\
\midrule
   100 & 1.0 & 0.55 & \textbf{0.09} & 0.38 & 0.30 & 0.50 & \textbf{0.01} & 0.07 & 1.45 & 0.67 & 2.76 & 3.70 \\ 
        120 & 0.3 & 0.58 & \textbf{0.14} & 0.54 & 0.28 & 0.58 & \textbf{0.07} & 0.09 & 2.09 & 0.92 & 2.43 & 2.70 \\ 
        120 & 0.8 & 1.16 & \textbf{0.17} & 0.57 & 0.28 & 0.55 & \textbf{0.04} & 0.07 & 1.95 & 1.28 & 3.47 & 3.90 \\ 
        150 & 0.3 & 1.34 & \textbf{0.25} & 0.75 & 0.34 & 0.83 & \textbf{0.07} & 0.14 & 2.01 & 1.17 & 3.30& 2.80 \\ 
        150 & 0.8 & 2.23 & \textbf{0.28} & 0.72 & 0.35 & 0.72 & 0.10 & \textbf{0.10} & 2.91 & 0.86 & 3.24 & 4.50 \\ 
        200 & 0.3 & 2.64 & 0.39 & 1.05 & \textbf{0.32} & 1.17 & 0.10 & \textbf{0.09} & 1.90 & 0.99 & 3.51 & 3.80 \\ 
        200 & 0.8 & 4.18 & 0.40 & 1.05 & \textbf{0.37} & 0.91 & \textbf{0.05} & 0.11 & 2.22 & 0.96 & 3.27 & 3.60 \\ 
        250 & 0.1 & 2.00 & \textbf{0.44} & 1.16 & 0.29 & 1.20 & \textbf{0.05} & 0.10 & 1.23 & 1.02 & 2.78 & 4.00 \\ 
\hline
\end{tabular*}
\end{table}
\subsection{Max cut}
The max-cut problem can be formulated as
$$
\max \left\{x^{T} W x: x \in\{-1,1\}^{n}\right\},
$$
where  $W$ is the weighted adjacency matrix of the given graph G.  
We test max cut problem on rudy and G set datasets.

\textbf{(rudy)}
Graphs of the following four types have been generated using rudy~\cite{rinaldi1998rudy}:
\begin{itemize}
\item $G_{0.5}$, $g05_n.i$ unweighted graphs with edge probability  1/2, n=60,80,100;
    \item  $G_{-1 / 0 / 1}$,  $pm1s_n.i$, $pm1d_n.i$
weighted graph with edge weights chosen uniformly from  \{-1,0,1\}  and density  10\%  and  99\%  respectively, n=80,100;
\item  $G_{[-10,10]} ,\mathrm{w} d \_n \cdot i$  Graph with integer edge weights chosen from  [-10,10]  and density  d=0.1,0.5,0.9, n=100;
\item $G_{[0,10]}, \mathrm{pw} d \_n . i$  Graph with integer edge weights chosen from  [-10,10]  and density  d=0.1,0.5,0.9, n=100.
\end{itemize}
%\begin{figure}[htbp]
%    \centering
%    \includegraphics[width=0.9\linewidth]{image56.png}
    %\caption{Enter Caption}
%    \label{fig:enter-label}
%\end{figure}
%\begin{figure}[H]
%    \centering
%    \includegraphics[width=0.9\linewidth]{image57.png}
%    %\caption{Enter Caption}
%    \label{fig:enter-label}
% \end{figure}

All five algorithms successfully solve all instances, with each instance being run once. From Tables~\ref{t6} we can see that PPA and PABB consistently have the smallest GAP, with PABB requiring less CPU time. In addition, ALM occasionally has the shortest runtime, but its solution quality is relatively inferior.

\begin{table}[htbp]
    \centering
    \caption{Experimental results of $rudy_{g05},rudy_{PM},rudy_{W},rudy_{PW}$}
    \label{t6}
    \setlength\tabcolsep{0.9pt}
    \begin{tabular*}{\textwidth}{@{\extracolsep\fill}llcccccccccc}
\toprule  
~& ~&\multicolumn{5}{c}{CPU Time(s)} & \multicolumn{5}{c}{GAP(\%)}  \\ \cmidrule{3-7}\cmidrule{8-12}%
n&Density & PPA & PABB & ALM &MPEC & SDR & PPA & PABB & ALM&MPEC & SDR    \\
\midrule
$\boldsymbol{rudy_{g05}}$\\
        60 & - & 0.78 & 0.15 & \textbf{0.12} & 0.41 & 0.49 & \textbf{0.06} & 0.11 & 0.19 & 0.47 & 2.58 \\ 
        80 & - & 1.55 & 0.26 & \textbf{0.14} & 0.43 & 0.51 & \textbf{0.10} & 0.16 & 0.64 & 0.18 & 2.24 \\ 
        100 & - & 2.64 & 0.45 & \textbf{0.21} & 0.52 & 0.56 & \textbf{0.03} & 0.05 & 0.41 & 0.37 & 2.42 \\  \hline
$\boldsymbol{rudy_{PM}}$\\        
        80 & 0.1 & 0.74 & 0.26 & \textbf{0.15} & 0.54 & 0.76 & 1.46 & \textbf{0.74} & 2.28 & 2.16 & 13.54 \\ 
        80 & 0.99 & 2.00 & 0.32 & \textbf{0.25} & 0.47 & 0.57 & \textbf{0.17} & 0.58 & 2.72 & 1.95 & 16.93 \\ 
        100 & 0.1 & 0.93 & 0.25 & \textbf{0.23} & 0.57 & 0.94 & \textbf{0.72} & 1.03 & 3.58 & 2.25 & 16.16 \\ 
        100 & 0.99 & 3.56 & 0.49 & \textbf{0.27} & 0.47 & 0.56 & \textbf{0.13} & 1.08 & 5.18 & 1.74 & 17.79 \\ 
        \hline
    $\boldsymbol{rudy_{W}}$\\ 
        100 & 0.1 & 0.32 & \textbf{0.17} & 0.26 & 0.47 & 0.64 & \textbf{0.57} & 0.74 & 1.65 & 1.95 & 14.32 \\ 
        100 & 0.5 & 1.49 & \textbf{0.27} & 0.31 & 0.44 & 0.55 & \textbf{0.44} & 1.20 & 3.78 & 2.88 & 17.10 \\ 
        100 & 0.9 & 2.84 & \textbf{0.43} & 0.44 & 0.49 & 0.56 & \textbf{1.05} & 1.33 & 6.37 & 2.52 & 16.25 \\ 
                \hline
    $\boldsymbol{rudy_{PW}}$\\ 
        100 & 0.1 & 0.39 & \textbf{0.16} & 0.26 & 0.46 & 0.58 & \textbf{0.29} & 0.62 & 0.56 & 1.48 & 4.23 \\ 
        100 & 0.5 & 1.87 & 0.41 & \textbf{0.35} & 0.50 & 0.52 & 0.12 &\textbf{0.06} & 1.04 & 0.38 & 2.64 \\ 
        100 & 0.9 & 4.45 & 0.62 & \textbf{0.47} & 0.62 & 0.50 & 0.06 & \textbf{0.02} & 4.44 & 0.24 & 1.95 \\ 
                \hline
\end{tabular*}
\end{table}
\textbf{(G sets)}
The instances in the G set are numbered in ascending order by dimension. We selected the first five instances for the 800, 1000, and 2000 dimensions. For the 3000-dimensional instances, there are only 3 total instances in the G set, so we included all of them in the test. The optimal solution (lb) is taken from the article~\cite{BENLIC20131162}.

As shown in Table~\ref{t10}, PABB has a shorter runtime compared to PPA and SRD, similar to ALM, but longer than MPEC. Overall, the solution quality of PABB is close to that of PPA, outperforming ALM, MPEC, and SDR, especially for the 1000-dimensional instances. In the case of the 3000-dimensional instances, ALM and SDR fail, while PPA and PABB prove to be more effective in solving the problems.

\begin{table}[htbp]
    \centering
    \caption{Experimental results of G sets}
    \label{t10}
    \setlength\tabcolsep{0.9pt}
    \begin{tabular*}{\textwidth}{@{\extracolsep\fill}llcccccccccc}
\toprule  
~& ~&\multicolumn{5}{c}{CPU Time(s)} & \multicolumn{5}{c}{GAP(\%)}  \\ \cmidrule{3-7}\cmidrule{8-12}%
Gset&n & PPA & PABB & ALM &MPEC & SDR & PPA & PABB & ALM&MPEC & SDR    \\
\midrule
G16 & 800 & 4.50 & 1.79 & 1.57 & \textbf{0.38} & 6.89 & 1.70 & 1.54 & \textbf{1.28} & 2.26 & 3.93 \\ 
        G17 & 800 & 4.27 & 1.65 & 1.93 & \textbf{0.44} & 6.72 & \textbf{1.12} & 1.54 & 1.31 & 2.03 & 4.73 \\ 
        G18 & 800 & 6.78 & 1.33 & 0.99 & \textbf{0.40} & 6.37 & \textbf{1.51} & 2.62 & 3.33 & 6.96 & 14.42 \\ 
        G19 & 800 & 6.37 & 1.16 & 1.21 & \textbf{0.31} & 6.03 & \textbf{1.10} & 2.54 & 3.86 & 6.51 & 12.25 \\ 
        G20 & 800 & 8.77 & 1.52 & 1.47 & \textbf{0.36} & 6.86 & \textbf{1.91} & \textbf{1.91} & 3.72& 4.57 & 16.15 \\ 
%        AVERAGE & ~ & 6.14 & 1.49 & 1.43 & \textbf{0.38} & 6.57 & \textbf{1.47} & 2.03 & 2.70 & 4.47 & 10.30 \\ \hline
        G43 & 1000 & 35.78 & 8.18 & 1.21 & \textbf{0.83} & 15.59 & 0.42 & \textbf{0.32} & 1.22 & 1.47 & 3.50 \\ 
        G44 & 1000 & 35.42 & 8.55 & 1.51 & \textbf{0.77} & 11.90 & \textbf{0.03} & 0.27 & 1.62 & 1.02 & 4.30 \\ 
        G45 & 1000 & 31.90 & 7.39 & 1.94 & \textbf{0.63} & 12.06 & \textbf{0.27} & 0.30 & 1.55 & 0.90 & 5.03 \\ 
        G46 & 1000 & 24.63 & 9.72 & 1.59 & \textbf{0.77} & 12.51 & \textbf{0.18} & 0.32 & 1.04 & 0.86 & 3.99 \\ 
        G47 & 1000 & 34.50 & 7.64 & 1.47 & \textbf{0.59} & 11.86 & 0.20 & \textbf{0.15} & 1.19 & 0.69 & 3.59 \\ 
%        AVERAGE & ~ & 32.44 & 8.30 & 1.54 & \textbf{0.72} & 12.78 & \textbf{0.22} & 0.27 & 1.32 & 0.99 & 4.08 \\ \hline
        G36 & 2000 & 19.42 & 7.01 & 4.29 & \textbf{0.59} & 62.05 & 1.65 & 1.62 & \textbf{1.16} & 3.10 & 4.10 \\ 
        G37 & 2000 & 18.20 & 7.64 & 5.05 & \textbf{0.66} & 59.30 & 1.87 & 1.38 & \textbf{1.11} & 3.10 & 4.03 \\ 
        G38 & 2000 & 21.84 & 7.98 & 4.38 & \textbf{0.67} & 59.96 & 1.50 & \textbf{1.35} & 1.76 & 2.94 & 4.27 \\ 
        G39 & 2000 & 35.33 & 11.41 & 6.06 & \textbf{1.32} & 61.18 & \textbf{2.28} & 2.70 & 5.90 & 8.06 & 14.87 \\ 
        G40 & 2000 & 27.42 & 8.68 & 4.87 & \textbf{0.51} & 61.13 & \textbf{1.08} & 3.67 & 3.83 & 10.13 & 15.71 \\ 
%        AVERAGE & ~ & 24.44 & 8.54 & 4.93 & \textbf{0.75} & 68.61 & \textbf{1.68} & 2.14 & 2.75 & 5.46 & 8.60 \\ \hline
        G48 & 3000 & 0.99 & \textbf{0.61} & - & 0.95 & - & 9.40 & \textbf{0.00} & \textbf{Fall} & \textbf{0.00} & \textbf{Fall} \\ 
        G49 & 3000 & 8.90 & \textbf{0.31} & - & 2.20 & - & \textbf{0.00} & \textbf{0.00} & \textbf{Fall} & 1.03 & \textbf{Fall} \\ 
        G50 & 3000 & \textbf{0.59} & 0.61 & - & 0.86 & - & 3.95 & 1.43 & \textbf{Fall} & \textbf{0.07} & \textbf{Fall} \\ 
%        AVERAGE & ~ & 3.49 & \textbf{0.51} & - & 1.33 & - & 4.45 & 0.48 & \textbf{Fall} & \textbf{0.37} &\textbf{Fall}\\ 
\hline
\end{tabular*}
\end{table}

\section{Conclusion}

This paper presents a vectorized PSDP method for solving the UBQP problem, where the method framework is based on PSDP and involves vectorizing matrix variables to significantly reduce computational complexity. Various enhancements have been made in algorithmic details, such as penalty factor updates and initialization. Two algorithms are proposed for solving subproblems, each incorporating proximal point method and projection alternating BB method. Properties of the penalty function and some aspects of algorithm convergence are discussed. Numerical experiments demonstrate that the proposed method not only achieves satisfactory solution times compared to SDR and other established methods but also demonstrates outstanding capability in obtaining high-quality solutions.

In future research, our goal is to delve deeper into strategies for accelerating the solution of the subproblem, with the aim of increasing the solution speed. In addition, we aim to synchronize the adjustment of the penalty parameter with the updates of the subproblem, thereby reducing the number of iterations required to solve the subproblem.

\begin{acknowledgements}
This work was funded by National Key R\&D Program of China grant No. 2022YFA1003800, NSFC grant No. 12201318 and the Fundamental Research Funds for the Central Universities No. 63243075.
\end{acknowledgements}

\bibliographystyle{spmpsci}
\bibliography{references}

\end{document}